\newtheorem{theor}{Theorem}[section]
\newtheorem{cor}[theor]{Corollary}
\newtheorem{lemma}[theor]{Lemma}
\newtheorem{prop}[theor]{Proposition}
\newtheorem{theorintro}{Theorem}
\theoremstyle{definition}
\newtheorem{rmk}[theor]{Remark}
\DeclareMathOperator{\cc}{\mathcal{C}}
\DeclareMathOperator{\pg}{\mathcal{P}}
\newcommand{\mg}[2]{\mathcal{C}^{[#1]}(#2)}
\DeclareMathOperator{\nc}{\mathcal{NC}}
\DeclareMathOperator{\aut}{Aut}
\DeclareMathOperator{\mcg}{Mod}
\DeclareMathOperator{\im}{Im}
\DeclareMathOperator{\St}{St}
\DeclareMathOperator{\bc}{\mathcal{B}}
\DeclareMathOperator{\ac}{\mathcal{A}}
\newcommand{\bg}[2]{\mathcal{B}^{[#1]}(#2)}
\newcommand{\pos}[1]{\left({#1}\right)^{pos}}
\title{Simplicial embeddings between multicurve graphs}
\author{Viveka Erlandsson}
\address[Viveka Erlandsson]{Aalto Science Institute, Aalto University, Finland \& University of Fribourg, Switzerland}
\email{viveca.erlandsson@aalto.fi}
\author{Federica Fanoni}
\address[Federica Fanoni]{Mathematics Institute, University of Warwick, UK}
\email{F.Fanoni@warwick.ac.uk}
\date{\today}
\keywords{Curve complex, pants graph, multicurves, mapping class group}
\subjclass[2010]{Primary: 57M15. Secondary: 05C60, 30F99.}
\date{\today}
\begin{document}
\begin{abstract}
We study some graphs associated to a surface, called $k$-multicurve graphs, which interpolate between the curve complex and the pants graph. Our main result is that, under certain conditions, simplicial embeddings between multicurve graphs are induced by $\pi_1$-injective embeddings of the corresponding surfaces. We also prove the rigidity of the multicurve graphs.
\end{abstract}
\maketitle

\section{Introduction}
Since the introduction of graphs associated to surfaces, there have been different questions raised about them. One line of questions concerns maps between such graphs. In particular, as the mapping class group usually acts on these complexes by automorphisms, it is interesting to understand whether these are all automorphisms or there are some not induced by a self-homeomorphism of the underlying surface. This is the {\it rigidity} problem: a graph associated to a surface is {\it rigid} if its automorphism group is the mapping class group of its surface. The first proof of the rigidity of the curve graph is due to Ivanov \cite{ivanov}, in the case of surfaces of genus at least two; subsequently Korkmaz \cite{korkmaz} proved it for low genus surfaces and a proof in the general is due to Luo \cite{luo}. In \cite{margalit}, Margalit proved the rigidity of the pants graph by reducing it to the rigidity of the curve complex. Multiple other graphs have been proven to be rigid (like the Hatcher-Thurston graph \cite{irmak} and the graph of nonseparating curves \cite{ik}), and it is conjectured that graphs associated to surfaces not having obvious obstructions (such as being disconnected) should be rigid.

A more subtle problem is studying {\it simplicial embeddings} (i.e.\ injective maps preserving the graph structure) between such objects. There are embeddings which correspond to maps between the underlying surfaces: for instance, in the case of the curve graph, a $\pi_1$-injective embedding of a surface into another (that is, seeing the first surface as an essential subsurface of the second) induces a simplicial embedding between the curve graphs. A similar construction holds for the pants graph: suppose $S_1$ is a subsurface of $S_2$ and choose of a $k$-multicurve $\nu$ in the complement of the subsurface, where $k$ is the difference of the complexities of the surfaces. Then we get a simplicial embedding of the pants graph of $S_1$ into the pants graph of $S_2$ by sending a pants decomposition $\mu$ of $S_1$ to $\mu\cup \nu$. In \cite{aramayona}, Aramayona showed that, except in some low complexity cases, any simplicial embedding between pants graphs arises this way. On the other hand, for curve graphs there are examples of simplicial embeddings not coming from $\pi_1$-injective embeddings between the underlying surfaces. For instance, puncturing a surface gives a simplicial embedding between the curve graph of the surface and the curve graph of the punctured one (see \cite{as} for a precise description of this). Note also that in \cite{aramayona2}, Aramayona described conditions for graphs built from arcs and/or curves to satisfy a similar superrigidity property. It is natural to ask which graphs, besides the pants graph, satisfy these conditions.

Among the reasons to study maps between graphs associated to surfaces one is the possibility of using such results to say more about mapping class groups. Examples of these results are Ivanov and McCarthy's results in \cite{ivanov2} and \cite{mccarthy}, saying that for surfaces of genus at least three, the group of automorphisms of the mapping class group is the extended mapping class group. A tool in their proof is Irmak's rigidity of the complex of nonseparating curves. In a similar way, rigidity of the curve complex and of other complexes built from multicurves is used by Ivanov in \cite{ivanov} to show that automorphisms between finite index subgroups of the mapping class groups are restrictions of automorphisms of the whole mapping class group.

In this paper, we are interested in a new set of graphs, called {\it $k$-multicurve graphs} (for $k$ between one and the complexity of the surface). These graphs can be thought of as graphs interpolating between the curve graph and the pants graph. Indeed, vertices of the $k$-multicurve graph $\mg{k}{S}$ are $k$-multicurves and edges correspond to minimal intersection, in such a way that for $k=1$ we obtain the curve graph and for $k$ equal to the complexity the pants graph.

Note that other graphs interpolating between the curve and the pants graph have been constructed by Mj in \cite{mj}, where he computes their geometric rank.

Just as in the case of pants graphs, if $S_1$ can be seen as a subsurface of $S_2$, we can see any multicurve graph of $S_1$ as subgraph of a multicurve graph of $S_2$. Indeed, fixing a $d$-multicurve $\nu$ on $S_2$ disjoint from $S_1$ and mapping a $k$-multicurve $\mu$ of $S_1$ to $\mu\cup\nu$ gives a simplicial embedding of $\mg{k}{S_1}$ into $\mg{k+d}{S_2}$. In general though, not every simplicial embedding between multicurve graphs is induced by a map between the underlying surfaces, as examples in Section \ref{ex} show. On the other hand, if we give some condition on the topology, we can control exactly how simplicial embedding arise. This is the content of our main theorem:
\begin{theorintro}\label{embedding_intro}
Let $S_1$ and $S_2$ be connected finite type surfaces such that the complexity $\xi(S_1)$ is at least $4+k_1$. Let $\varphi:\mg{k_1}{S_1}\hookrightarrow \mg{k_2}{S_2}$ be a simplicial embedding, with $k_2\geq k_1$, and assume $\xi(S_2)-\xi(S_1)\leq k_2-k_1$. Then 
$$\xi(S_2)-\xi(S_1)=k_2-k_1$$
and:
\begin{itemize}
\item if $k_2=k_1$, $\varphi$ is an isomorphism induced by a homeomorphism $f:S_1\rightarrow S_2$;
\item if $k_2>k_1$, there exist a $\pi_1$-injective embedding $f:S_1\rightarrow S_2$ and a $(k_2-k_1)$-multicurve $\nu$ on $S_2$ such that for any $\mu\in\mg{k_1}{S_1}$ we have
$$\varphi(\mu)=f(\mu)\cup\nu.$$
\end{itemize}
\end{theorintro}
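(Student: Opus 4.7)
The plan is to adapt the strategy used by Aramayona for the pants graph and by Ivanov--Korkmaz--Luo for the curve complex: extract from $\varphi$ a simplicial embedding between the curve graphs of $S_1$ and a suitable subsurface of $S_2$, and then invoke rigidity of the curve graph to realize it by a homeomorphism. The hypothesis $\xi(S_1)\geq 4+k_1$ serves two purposes: after cutting $S_1$ along any $k_1$-multicurve we remain in the range of curve graph rigidity, and there is enough room combinatorially to argue that $\varphi$ behaves well on the sub-star of each individual curve.

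The combinatorial heart of the argument is the following statement, which I would prove first: for every curve $\alpha$ on $S_1$, there exists a multicurve $\sigma_\alpha$ on $S_2$ of size exactly $k_2-k_1+1$ which is contained in $\varphi(\mu)$ for every $k_1$-multicurve $\mu$ containing $\alpha$. To establish this I would consider the subcomplex $V_\alpha\subset \mg{k_1}{S_1}$ of $k_1$-multicurves containing $\alpha$, which is naturally isomorphic to a multicurve graph of $S_1\setminus\alpha$ (with $\alpha$ adjoined); any two of its vertices are connected by edges that modify only a single curve distinct from $\alpha$. Since edges in $\mg{k_2}{S_2}$ likewise only change one curve, walking along such a path forces the images of $V_\alpha$ to share a common sub-multicurve, and the numerical hypothesis $\xi(S_2)-\xi(S_1)\leq k_2-k_1$ forces this shared sub-multicurve to have size at least $k_2-k_1+1$. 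This is the main obstacle, and it is also where $\xi(S_1)\geq 4+k_1$ is needed, to ensure the link structure in $V_\alpha$ is connected and rich enough to pin down $\sigma_\alpha$ uniquely.

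Next I would extract the fixed part. Using that any two disjoint curves $\alpha,\beta$ on $S_1$ lie in a common $k_1$-multicurve $\mu$, both $\sigma_\alpha$ and $\sigma_\beta$ sit inside $\varphi(\mu)$, a $k_2$-multicurve; combinatorics of sizes forces $|\sigma_\alpha\cap\sigma_\beta|\geq k_2-k_1$. Running this over all $\alpha$ and using connectivity of $\cc(S_1)$, the intersection $\nu:=\bigcap_\alpha \sigma_\alpha$ is a well-defined multicurve on $S_2$ of size exactly $k_2-k_1$. Setting $f(\alpha):=\sigma_\alpha\setminus\nu$ then produces a map $f:\cc(S_1)\to \cc(S_2\setminus\nu)$; injectivity of $\varphi$ translates into injectivity of $f$, and disjoint curves in $S_1$ are sent to disjoint curves in $S_2\setminus\nu$. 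The hypothesis gives $\xi(S_2\setminus\nu)\leq \xi(S_1)$, while the existence of the embedding $f$ forces $\xi(S_2\setminus\nu)\geq\xi(S_1)$, so equality holds, yielding $\xi(S_2)-\xi(S_1)=k_2-k_1$ and making $f$ a simplicial isomorphism of curve graphs.

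Rigidity of the curve graph (Ivanov, Korkmaz, Luo) applies in the range $\xi(S_1)\geq 5$ and produces a homeomorphism $\bar f:S_1\to S_2\setminus\nu$ inducing $f$; composing with inclusion gives the desired $\pi_1$-injective embedding $S_1\hookrightarrow S_2$. Finally, for any $\mu=\{\alpha_1,\ldots,\alpha_{k_1}\}\in\mg{k_1}{S_1}$, the image $\varphi(\mu)$ contains $\sigma_{\alpha_i}$ for each $i$, hence contains $f(\mu)\cup\nu$, and a count of cardinalities gives equality, establishing the formula $\varphi(\mu)=f(\mu)\cup\nu$. When $k_2=k_1$ we have $\nu=\emptyset$ and $f$ is a homeomorphism $S_1\to S_2$, recovering the first bullet of the theorem.
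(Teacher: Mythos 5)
The paper proves this theorem by induction on $k_1$, with the base case $k_1=1$ proved directly via Shackleton's rigidity for curve graph embeddings, and the inductive step routed through rigidity of the multicurve graphs (Theorem B) and a careful analysis of nice pairs. Your proposal attempts to bypass this induction entirely by building, for each curve $\alpha$, a multicurve $\sigma_\alpha$ directly. This is an interesting idea, but there are two genuine gaps.

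First, the existence of $\sigma_\alpha$ with $|\sigma_\alpha|=k_2-k_1+1$ and $\sigma_\alpha\subseteq\varphi(\mu)$ for every $\mu\ni\alpha$ is exactly the statement of the theorem applied to the restriction $\varphi|_{V_\alpha}:\mg{k_1-1}{S_1\setminus\alpha}\hookrightarrow\mg{k_2}{S_2}$, so you are implicitly invoking the inductive hypothesis without setting up the induction. The argument you sketch (``walking along a path in $V_\alpha$ forces the images to share a common sub-multicurve'') does not establish this: along a path where each step changes one curve, there is no a priori reason for any subcollection of curves to persist from one end to the other, and for $k_1\geq 2$ the subcomplex $V_\alpha$ is a multicurve graph rather than a curve graph, so the analogue of the paper's star-decomposition trick (Remark \ref{Stardec}) is not available without more work.

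Second, even granting the existence of $\sigma_\alpha$, the claim $|\sigma_\alpha\cap\sigma_\beta|\geq k_2-k_1$ does not follow from the ``combinatorics of sizes.'' Two subsets of size $k_2-k_1+1$ inside a set of size $k_2$ satisfy only $|\sigma_\alpha\cap\sigma_\beta|\geq 2(k_2-k_1+1)-k_2=k_2-2k_1+2$, which is at least $k_2-k_1$ only when $k_1\leq 2$. For $k_1\geq 3$ one needs more than cardinality: the paper obtains $|\nu_\alpha\cap\nu_\beta|=d$ by exploiting the induced maps $f_\alpha,f_\beta$ from the inductive hypothesis and letting the complementary $(k_1-2)$-multicurve $\widetilde{\mu}$ vary to match the two descriptions of $\varphi(\mu)$ (Lemma \ref{nicepair}, case $k_1\geq 3$). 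Your argument has no analogue of this, so the size of $\nu$ is not established. Until these two points are addressed, the subsequent reduction to curve-graph rigidity does not get off the ground.
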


An analogous control on the topology is necessary when trying to describe injective homomorphisms between mapping class groups. While in general one cannot say much, restricting the topology allows us to say that injective maps between (extended) mapping class groups are induced by maps between the surfaces (see the results of Aramayona and Souto \cite{as2}, Ivanov and McCarthy \cite{im} and Bell and Margalit \cite{bm}).

While proving Theorem \ref{embedding_intro} we also show that the multicurve graphs are rigid.
\begin{theorintro}\label{rigidity_intro}
Let $S$ be a finite type surface of complexity $\xi(S)$ at least four. For any $k$ between $1$ and $\xi(S)$, the natural map
$$\mcg(S)\rightarrow\aut\left(\mg{k}{S}\right)$$
is a group isomorphism.
\end{theorintro}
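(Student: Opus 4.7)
The plan is to construct, from any automorphism $\varphi$ of $\mg{k}{S}$, an automorphism of the curve graph $\mg{1}{S}$, and then invoke its rigidity (due to Ivanov, Korkmaz and Luo). I split the argument by the value of $k$: the extreme values are known, with $k=1$ being the curve graph itself and $k=\xi(S)$ being Margalit's theorem on the pants graph. So the focus is on the intermediate range $1<k<\xi(S)$.

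For each simple closed curve $c$ on $S$, set $\St(c)=\{\mu\in\mg{k}{S}:c\in\mu\}$, the collection of $k$-multicurves containing $c$. The core step is to produce a purely graph-theoretic characterization of the subsets of the form $\St(c)$ inside $\mg{k}{S}$: for example, as maximal subsets of vertices all sharing a common curve component, a condition I would detect from cliques and adjacency patterns in $\mg{k}{S}$. Once these subsets are identified intrinsically, $\varphi$ must permute them, yielding an induced bijection $\varphi_*$ on the set of isotopy classes of simple closed curves of $S$.

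Next, I would verify that $\varphi_*$ is an automorphism of the curve graph. The hypothesis $\xi(S)\geq 4$ ensures that two distinct curves $c,c'$ are disjoint if and only if some $k$-multicurve contains both, equivalently $\St(c)\cap\St(c')\neq\emptyset$; since $\varphi$ is a bijection, this equivalence transfers through $\varphi_*$. By the rigidity of $\mg{1}{S}$, there exists $f\in\mcg(S)$ inducing $\varphi_*$. A direct check, multicurve by multicurve, then shows that $\varphi$ itself is the automorphism of $\mg{k}{S}$ induced by $f$, proving surjectivity of $\mcg(S)\to\aut(\mg{k}{S})$. Injectivity reduces to the same statement for the curve graph: if $f$ fixes every $k$-multicurve set-wise, then varying the multicurve forces $f$ to fix every individual curve, and the known triviality of the kernel of $\mcg(S)\to\aut(\mg{1}{S})$ in this range of $\xi(S)$ gives $f=\mathrm{id}$ in $\mcg(S)$.

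The main obstacle is the graph-theoretic detection of the stars $\St(c)$. The case $k=1$ is tautological and the case $k=\xi(S)$ was handled by Margalit using the specific structure of pants decompositions, but in the intermediate range one must simultaneously handle nonseparating curves and separating curves of various topological types, and the adjacency structure around a given $k$-multicurve depends sensitively on the topology of the complementary subsurfaces (whose pieces may have complexity $0$, $1$ or more, producing different patterns of ``minimal intersection'' moves). I expect that the combinatorial machinery developed to prove Theorem~\ref{embedding_intro}, which already must identify curves inside multicurve graphs to show that embeddings are topologically induced, applies in this setting and produces the required characterization.
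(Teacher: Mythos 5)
Your overall direction is the right one, and parts of it match the paper's strategy: the base cases ($k=1$ via Ivanov--Korkmaz--Luo, $k=\xi(S)$ via Margalit) are indeed what the paper uses, and the paper too ultimately descends to the curve graph to invoke its rigidity. However, the paper does \emph{not} attempt a direct characterization of the subsets $C_c=\{\mu: c\in\mu\}$ for single curves $c$. Instead it runs an induction $k\to k-1$, following Margalit's scheme for the pants graph: one considers the surjection
$\theta: E\bigl(\mg{k}{S}\bigr)\to V\bigl(\mg{k-1}{S}\bigr)$, $\alpha\beta\mapsto \alpha\cap\beta$,
and shows (Lemma 4.1) that any $A\in\aut\bigl(\mg{k}{S}\bigr)$ respects the fibers of $\theta$, so it descends to a bijection of $V\bigl(\mg{k-1}{S}\bigr)$. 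This is verified not by a combinatorial characterization of the $C_\mu$, but by applying the Section~3 result that any simplicial embedding $\cc(\Sigma)\hookrightarrow\mg{k}{S_2}$ has image inside some $C_\nu$, to the map $a\mapsto A(\{a\}\cup\mu)$. The rest of Section 4 checks this descent is a group isomorphism $\aut\bigl(\mg{k}{S}\bigr)\to\aut\bigl(\mg{k-1}{S}\bigr)$.

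The core step you identify as the ``main obstacle,'' namely a graph-theoretic detection of the sets $\St(c)=C_c$ for single curves, is genuinely the gap: you state it but do not carry it out, and your proposed route through cliques is in trouble. In $\mg{k}{S}$ there are triangles $\{\alpha,\beta\}\cup\nu$, $\{\beta,\gamma\}\cup\nu$, $\{\alpha,\gamma\}\cup\nu$ whose vertices have no single common curve (this is exactly the configuration the paper highlights in Section~2.3 as the obstruction to Aramayona's superrigidity conditions), so cliques do not cleanly detect a common curve. You would need a more refined invariant than cliques, and the paper sidesteps this entirely by working with shared $(k-1)$-multicurves (which \emph{are} encoded by a single edge) rather than shared single curves. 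Finally, appealing to ``the combinatorial machinery developed to prove Theorem~\ref{embedding_intro}'' is dangerous here: in the paper Theorem~\ref{rigidity_intro} is proved \emph{before} and is a key ingredient \emph{of} the full Theorem~\ref{embedding_intro} (the induction step in Section~5 uses rigidity). The only part of the embedding-theorem machinery available at this stage is the restricted base case of Section~3 ($k_1=1$), which is precisely what the paper's Lemma 4.1 invokes.
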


This paper is organized as follows. In Section \ref{preliminaries} we introduce the objects of study and we prove some basic results. Theorem \ref{embedding_intro} is proved by induction on $k_1$ and Section \ref{base} contains the proof of the base case. Using results of Section \ref{base}, in Section \ref{rigidity} we deduce the rigidity of the multicurve graphs (Theorem \ref{rigidity_intro}). In Section \ref{induction} we complete the proof of our main theorem by showing the induction step.

\subsection*{Acknowledgements}
This work started when both authors were visiting the CUNY Graduate Center. The authors would like to thank CUNY and in particular Ara Basmajian for the hospitality. They are grateful to Javier Aramayona, Brian Bowditch, Hugo Parlier and Juan Souto for useful discussions. They would especially like to thank Javier Aramayona for sharing the manuscript of \cite{aramayona2}.

Both authors acknowledge support from Swiss National Science Foundation grant number PP00P2\_128557 and the second author also acknowledges support from Swiss National Science Foundation grant number P2FRP2\_161723 and U.S. National Science Foundation grants DMS 1107452, 1107263, 1107367 ``RNMS: GEometric structures And Representation varieties'' (the GEAR Network).
\section{Preliminaries}\label{preliminaries}
Throughout the paper, we will assume all surfaces to be of finite type, connected and of negative Euler characteristic.

\subsection{Definitions and notation.}\label{defs} 
(Almost) all graphs we consider are defined using curves on surfaces. Here by {\it curve} we mean the free homotopy class of a simple closed curve, not homotopic to a point or a puncture. We say that two curves $\alpha$ and $\beta$ are {\it disjoint} if there are representatives $a$ of $\alpha$ and $b$ of $\beta$ that are disjoint.

A {\it $k$-multicurve} is a collection of $k$ disjoint curves. The maximal size of a multicurve is the {\it complexity} $\xi=\xi(S)=3g(S)-3+n(S)$ of $S$; here $g(S)$ and $n(S)$ are the genus and the number of punctures of $S$, respectively. A maximal size multicurve is a {\it pants decomposition}. Given a curve $\alpha$ or a multicurve $\mu$, $S\setminus \alpha$ or $S\setminus \mu$ denotes the surface obtained by removing, respectively, a representative of $\alpha$ or disjoint representatives of the curves of $\mu$. Note that for us cutting along a curve gives rise to two punctures (and not two boundary components, as for other authors). From the point of view of the multicurve graphs, there is no difference between surfaces with boundary or punctures and considering open surfaces makes some of our proofs cleaner.

We say that a curve $\alpha$ is {\it nonseparating} if $S\setminus \alpha$ is connected. The curve is an {\it outer} curve if $S\setminus\alpha$ has a component of complexity zero (i.e.\ a {\it pair of pants}). Given two disjoint outer or nonseparating curves $\alpha$ and $\beta$, we say that they form a {\it nice pair} if $S\setminus (\alpha\cup\beta)$ has exactly one positive complexity component. Note that as a consequence of the definition, if $(\alpha,\beta)$ is a nice pair, then the unique component of $S\setminus (\alpha\cup\beta)$ which is not a pair of pants has complexity $\xi(S)-2$.

\begin{figure}[h]
\begin{overpic}{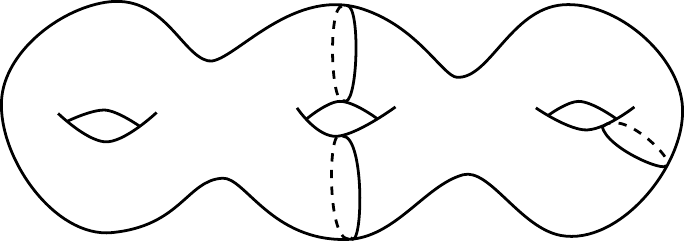}
\put (55,6) {$\alpha$}
\put (98,6) {$\beta$}
\put (53,27) {$\gamma$}
\end{overpic}
\caption{$\alpha$ and $\beta$ form a nice pair, while $\alpha$ and $\gamma$ don't.}
\end{figure}

We study the {\it $k$-multicurve graphs}, for $k\in\{1,\dots,\xi(S)\}$. For a given $k$, the vertices of $\mg{k}{S}$ are $k$-multicurves. Two multicurves are joined by an edge if they have a common $(k-1)$-multicurve $\nu$ and the remaining two curves intersect minimally on $S\setminus \nu$. Note that the minimum number of intersections of two curves on $S\setminus \nu$ is
\begin{itemize}
\item zero, if $\xi(S\setminus \nu)\geq 2$,
\item one, if $S\setminus \nu$ is a once-punctured torus,
\item two, if $S\setminus \nu$ if a four-punctured sphere.
\end{itemize}
With this definition, $\mg{1}{S}$ it the curve graph and $\mg{\xi(S)}{S}$ is the pants graph, so the multicurve graphs can be seen as graphs interpolating between $\cc(S)$ and $\pg(S)$.

We also need some definitions from graph theory. Given a graph $G$, we denote by $V(G)$ its set of vertices and by $E(G)$ its set of edges. A subgraph $H$ of $G$ is an {\it induced} subgraph if any two vertices of $H$ which are adjacent in $G$ are adjacent in $H$ as well. If $W$ is a set of vertices of $G$, the subgraph {\it spanned} by $W$ is the induced subgraph $H$ with $V(H)=W$. If $v$ is a vertex, the {\it star} of $v$, denoted $\St(v)$, is the subgraph spanned by $v$ and all its adjacent vertices.

An important induced subgraph of $\mg{k}{S}$ is the graph spanned by all multicurves containing $\nu$, where $\nu$ is a multicurve (possibly reduces to a single curve). We denote this graph by $C_\nu$. If $d=|\nu|$, there is a natural isomorphism $C_\nu\simeq\mg{k-d}{S\setminus \nu}$, given by $\mu\mapsto\mu\setminus \nu$.
\begin{rmk}\label{int_complete}
If $\nu_1$ and $\nu_2$ are two distinct $(k-1)$-multicurves such that {$C_{\nu_1}\cap C_{\nu_2}\neq \emptyset$}, then $C_{\nu_1}\cap C_{\nu_2}$ is exactly one vertex, which is the multicurve $\nu_1\cup\nu_2$. In particular, $|\nu_1\cap\nu_2|=k-2$ (that is, the intersection has maximal possible size).
\end{rmk}
\subsection{Connectedness results}\label{connectedness}
The connectivity of the multicurve graphs can be shown with the same ideas used in \cite[Section 1.4]{mm2} to prove that two pants decompositions of the five-holed sphere can be connected by a path in the pants graph. We reproduce the argument for completeness.
\begin{lemma}\label{connected}
The multicurve graphs are connected.
\end{lemma}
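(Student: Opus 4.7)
The plan is to induct on $k$. The base case $k=1$ is immediate: $\mg{1}{S}$ is the curve graph, and this is known to be connected in all cases, regardless of whether the minimum intersection number used to define edges is $0$, $1$, or $2$ (i.e., irrespective of whether $S$ is a once-punctured torus, four-holed sphere, or a higher complexity surface).

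For the inductive step, I would assume that $\mg{k-1}{S'}$ is connected for every surface $S'$ with $\xi(S')\geq k-1$, and take $\mu_1,\mu_2\in V(\mg{k}{S})$ on a surface with $\xi(S)\geq k\geq 2$. The strategy is to lift a path in the curve graph of $S$ to a path in $\mg{k}{S}$, using the subgraphs $C_\alpha$ to interpolate between consecutive steps. Concretely, choose $\alpha_j\in\mu_j$ for $j=1,2$. Since $\xi(S)\geq 2$, $\mg{1}{S}$ is connected with edges corresponding to disjointness, so there is a path $\alpha_1=\beta_0,\beta_1,\ldots,\beta_n=\alpha_2$ in $\mg{1}{S}$ with consecutive $\beta_i$ disjoint. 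For each $1\leq i\leq n$, since $\xi(S)\geq k$, extend the $2$-multicurve $\{\beta_{i-1},\beta_i\}$ to a $k$-multicurve $\delta_i$. Then $\mu_1$ and $\delta_1$ both contain $\beta_0$; for $1\leq i\leq n-1$ the multicurves $\delta_i$ and $\delta_{i+1}$ both contain $\beta_i$; and $\delta_n$ and $\mu_2$ both contain $\beta_n$. Each such pair lies in a subgraph of the form $C_{\beta_i}\simeq\mg{k-1}{S\setminus\beta_i}$, and since $\xi(S\setminus\beta_i)=\xi(S)-1\geq k-1$, this subgraph is connected by the inductive hypothesis. Concatenating the resulting paths yields a path from $\mu_1$ to $\mu_2$ in $\mg{k}{S}$.

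The proof is essentially mechanical, and I do not anticipate a real obstacle; the only point requiring attention is verifying the numerical inequalities. We use $\xi(S)\geq 2$ so that curve graph adjacency amounts to disjointness (so that each $\{\beta_{i-1},\beta_i\}$ is genuinely a multicurve), and $\xi(S)\geq k$ both to ensure the extensions $\delta_i$ exist and to guarantee $\xi(S\setminus\beta_i)\geq k-1$ so the inductive hypothesis applies. Both inequalities are consequences of the hypothesis that $S$ admits a $k$-multicurve at all.
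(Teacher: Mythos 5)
Your proof is correct and follows essentially the same route as the paper's: induct on $k$, lift a path in the curve graph between a curve of $\mu_1$ and a curve of $\mu_2$, interpolate by $k$-multicurves containing consecutive pairs, and close the gaps inside subgraphs of the form $C_{\beta_i}\simeq\mg{k-1}{S\setminus\beta_i}$ using the inductive hypothesis. The only difference is cosmetic (notation and the explicit bookkeeping of the inequalities $\xi(S)\geq 2$ and $\xi(S\setminus\beta_i)\geq k-1$, which the paper leaves implicit).
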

\begin{proof}
We proceed by induction on $k$. The base case, for $k=1$, is the connectivity of the curve graph, which is well known.

Suppose now $\mg{k-1}{\Sigma}$ is connected for every surface satisfying $\xi(\Sigma)\geq k-1$. Consider a surface $S$ of complexity at least $k$ and two $k$-multicurves $\nu=\{\alpha_1,\dots,\alpha_k\}$ and $\nu'=\{\beta_1,\dots,\beta_k\}$. Since the curve graph is connected, there is a path in $\cc(S)$ between $\alpha_1$ and $\beta_2$, say $\alpha_1=\gamma_0,\dots,\gamma_m=\beta_1$. Choose a $k$-multicurve $\nu_1$ which contains $\alpha_1$ and $\gamma_1$. Then by induction there is a path in $\mg{k-1}{S\setminus\alpha_1}$ between $\nu\setminus\{\alpha_1\}$ and $\nu_1\setminus \{\alpha_1\}$, hence a path in $\mg{k}{S}$ between $\nu$ and $\nu_1$. In the same way, we construct multicurves $\nu_2,\dots\nu_{m}$ such that $\nu_i$ contains $\gamma_{i-1}$ and $\gamma_i$, for $i\leq m$, and paths between $\nu_i$ and $\nu_{i+1}$, for $i\leq m$, and between $\nu_m$ and $\nu'$. The concatenation of these gives a path in $\mg{k}{S}$ between $\nu$ and $\nu'$.
\end{proof}

For the proof of Theorem \ref{embedding_intro} we need to consider the subgraph $\bg{k}{S}$ of $\mg{k}{S}$ given by:
\begin{itemize}
\item $V\left(\bg{k}{S}\right)=\{\mu\, | \,\mu \, \mbox{contains a nonseparating or outer curve}\},$
\item $E\left(\bg{k}{S}\right)=\{\mu\nu\, | \,|\mu\cap\nu|=k-1 \, \mbox{and the remaining curves form a nice pair}\}.$
\end{itemize}
For $k=1$ we simply denote the graph by $\bc(S)$. For $k=\xi(S)$ there is no edge in $\bg{k}{S}$, but its set of vertices is the same as the set of vertices of $\mg{k}{S}=\pg(S)$. In general the graph is not an induced subgraph of the multicurve graph, since there are pairs of disjoint curves which do not form nice pairs.

To prove that $\bg{k}{S}$ is connected, for $\xi(S)$ big enough, we consider two auxiliary graphs. For a surface with genus, we denote by $\nc(S)$ the subgraph of $\cc(S)$ spanned by nonseparating curves. 

If $S$ has punctures, by {\it arc} on $S$ we mean the homotopy class of an embedded arc, starting and ending at punctures, which is not nullhomotopic (relative to the punctures). The {\it arc graph} $\ac(S)$ is the graph whose vertices are arcs and whose edges correspond to pair of arcs that can be realized disjointly.

\begin{lemma}\label{bconn}
If $\xi(S)\geq 3$, the graph $\bc(S)$ is connected.
\end{lemma}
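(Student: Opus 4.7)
My plan is to transfer the connectivity of the two auxiliary graphs $\nc(S)$ and $\ac(S)$ introduced just before the lemma to $\bc(S)$, via a path-replacement argument that splits according to whether $S$ has positive or zero genus.

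Assume first that $g(S) \geq 1$, so $\nc(S)$ is nonempty and known to be connected for $\xi(S) \geq 2$. I would begin by showing that every outer vertex of $\bc(S)$ is joined by a nice pair to some nonseparating curve: if $\alpha$ is outer, its complement has complexity $\xi(S)-1 \geq 2$ and retains positive genus (the cut-off pair of pants carries no genus), so it contains a nonseparating curve $\gamma$ of $S$, and one can choose $\gamma$ so that $S \setminus (\alpha \cup \gamma)$ has exactly one positive-complexity component. It therefore suffices to connect any two nonseparating curves $\alpha, \beta$ inside $\bc(S)$. Using connectivity of $\nc(S)$ I reduce to $\alpha$ and $\beta$ disjoint, and then argue case by case on the topology of $S \setminus (\alpha \cup \beta)$: if $\alpha \cup \beta$ is nonseparating, its complement is a single component of complexity $\xi(S)-2\geq 1$ and the pair is nice; if $\alpha \cup \beta$ separates off only a pair of pants, the pair is again nice; the remaining case, in which $\alpha \cup \beta$ separates $S$ into two positive-complexity components, is resolved by inserting an intermediate nonseparating curve $\gamma$ that forms a nice pair with each of $\alpha,\beta$ simultaneously.

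Assume now $g(S) = 0$, so $S = S_{0,n}$ with $n \geq 6$. Here there are no nonseparating curves and every vertex of $\bc(S)$ is an outer curve bounding a pair of pants containing exactly two punctures. I would exploit the classical connectivity of $\ac(S)$ for $n \geq 5$: to each arc between two punctures I associate the outer curve bounding its regular neighborhood. An edge of $\ac(S)$ then gives a disjoint pair of outer curves, and I would verify that, up to a short local detour through one auxiliary outer curve in the finitely many configurations where the removal of the pair cuts off extra pants, this detour realizes the edge of $\ac(S)$ as a sequence of nice-pair edges in $\bc(S)$.

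The main obstacle I anticipate is the separating subcase in the positive-genus argument: producing a single nonseparating curve $\gamma$ disjoint from both $\alpha$ and $\beta$ such that $\alpha \cup \gamma$ and $\gamma \cup \beta$ are each nonseparating in $S$ (so that $(\alpha,\gamma)$ and $(\gamma,\beta)$ are nice pairs). This requires a careful change-of-coordinates argument inside the larger of the two complementary components and uses the hypothesis $\xi(S) \geq 3$ sharply; an exactly parallel subtlety arises in the planar case when a disjoint pair of outer curves chops off more than one pair of pants, and both are to be handled by the same local replacement technique.
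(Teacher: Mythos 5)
Your overall architecture — split by genus and transfer connectivity from $\nc(S)$ and $\ac(S)$ via path-replacement — matches the paper's, and your argument for genus at least two is essentially the paper's. But two of your steps contain genuine gaps, and in both you have misidentified where the difficulty lies.

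In the positive-genus separating subcase you propose a single nonseparating $\gamma$ disjoint from $\alpha$ and $\beta$ with $\alpha\cup\gamma$ and $\gamma\cup\beta$ nonseparating. In genus one this is impossible: $S\setminus\alpha$ is planar, so every simple closed curve in it separates it, and hence $S\setminus(\alpha\cup\gamma)$ is disconnected for any $\gamma$ disjoint from $\alpha$. Even the weaker goal of a nonseparating $\gamma$ forming nice pairs with both fails in general: if $\alpha,\beta$ are disjoint nonseparating curves on a genus-one surface whose two planar complementary components carry $p_1$ and $p_2$ punctures, any nonseparating $\gamma$ disjoint from both must separate $\alpha$ from $\beta$ inside one component, and the complexity count forces exactly one puncture on each side of $\gamma$; when $p_1\geq 3$ and $p_2\geq 3$ this is impossible. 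No change-of-coordinates argument can produce a curve that does not exist, which is why the paper treats $g=1$ separately via Schmutz Schaller's chains of once-intersecting nonseparating curves, whose complement is a punctured disk containing an obvious outer curve $\delta_j$. (A repair closer to your plan: let the intermediate $\gamma$ be an \emph{outer} curve around two punctures in one complementary component; such $\gamma$ always exists under the hypothesis and automatically forms a nice pair with any disjoint nonseparating curve. But as written your plan does not go through.)

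In the planar case the obstacle you anticipate does not occur: for $g=0$ and $\xi(S)\geq 3$, two disjoint outer curves cut off disjoint pairs of pants and leave a single middle piece of complexity $\xi(S)-2\geq 1$, so they always form a nice pair. The real issues are the ones you did not flag. First, an edge of $\ac(S)$ need \emph{not} give a disjoint pair of outer curves: two disjoint arcs sharing a puncture endpoint have regular neighborhoods whose boundary curves intersect. Second, paths in $\ac(S)$ may pass through self-arcs (both endpoints at the same puncture), which have no associated outer curve. The paper handles both: it uses unicorn paths to produce a chain of arcs with distinct endpoints, and for consecutive arcs sharing endpoints it exhibits a third arc disjoint from both and sharing no endpoint with either, yielding a two-step detour in $\bc(S)$. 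Your plan needs to address these two points; the ``extra pants'' configuration you worry about is a red herring.
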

\begin{proof}
Let $g$ be the genus of $S$.

If $g=0$, we have a bijection between outer curves and the subset of $V\left(\ac(S)\right)$ given by arcs with two different endpoints: given an arc between two different punctures, the associated outer curve is the boundary of a regular neighborhood of the arc.

Consider two outer curves $\alpha$ and $\beta$ on $S$. Because of the condition on the complexity of $S$, they form a nice pair if and only if they are disjoint. So in this case $\bc(S)$ is an induced subgraph of $\cc(S)$ (spanned by outer curves).

Suppose $\alpha$ and $\beta$ intersect and consider the associated arcs $\widetilde{\alpha}$ and $\widetilde{\beta}$. If $\widetilde{\alpha}$ and $\widetilde{\beta}$ are adjacent in $\ac(S)$, they must share at least one endpoint and it is easy to find an arc disjoint from both sharing no endpoints with either of them (see Figure \ref{commonendpts}). So we get a path between $\alpha$ and $\beta$.
\begin{figure}[h]
\begin{overpic}{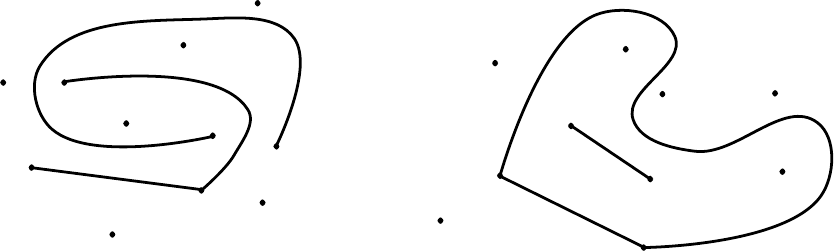}
\put (12,4) {$\widetilde{\alpha}$}
\put (17,22) {$\widetilde{\beta}$}
\put (37,20) {$\widetilde{\gamma}$}
\put (23,-6) {(1)}
\put (65,1) {$\widetilde{\alpha}$}
\put (85,14) {$\widetilde{\beta}$}
\put (72,7) {$\widetilde{\gamma}$}
\put (75,-6) {(2)}
\end{overpic}
\vspace*{0.5cm}
\caption{Arcs sharing one or two endpoints and the corresponding arc disjoint from both. In case (2), there are enough cusps either inside or outside the curve formed by $\widetilde{\alpha}$ and $\widetilde{\beta}$.}\label{commonendpts}
\end{figure}
Suppose then $\widetilde{\alpha}$ and $\widetilde{\beta}$ are not adjacent in $\ac(S)$. Using the construction of {\it unicorn paths} described in \cite{hpw}, we obtain a path $\widetilde{\gamma_0}=\widetilde{\alpha},\dots,\widetilde{\gamma_m}=\widetilde{\beta}$ in $\ac(S)$ such that no $\widetilde{\gamma_j}$ starts and ends at the same puncture: indeed, a unicorn path from $\widetilde{\alpha}$ to $\widetilde{\beta}$ contains arcs starting at an endpoint of $\widetilde{\alpha}$ and ending at an endpoint of $\widetilde{\beta}$, and these can be chosen to be different.
\begin{figure}[h]
\begin{overpic}{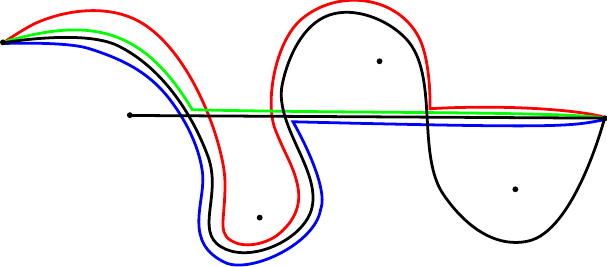}
\put (15,22) {$\widetilde{\alpha}$}
\put (71,2) {$\widetilde{\beta}$}
\put (85,29) {\textcolor{red}{$\widetilde{\gamma_1}$}}
\put (25,3) {\textcolor{blue}{$\widetilde{\gamma_2}$}}
\put (56,28) {\textcolor{green}{$\widetilde{\gamma_3}$}}
\end{overpic}
\caption{A unicorn path between the arcs $\widetilde{\alpha}$ and $\widetilde{\beta}$}\label{unicorn}
\end{figure}

Consider the corresponding outer curves $\gamma_0=\alpha,\dots,\gamma_m=\beta$. For any $j$, the arcs $\widetilde{\gamma_j}$ and $\widetilde{\gamma_{j+1}}$ share two endpoints. As described before, there is an outer curve $\delta_j$ which is disjoint from both $\gamma_j$ and $\gamma_{j+1}$. Then
$$\alpha=\gamma_0,\delta_0,\gamma_1,\delta_1,\dots,\gamma_m=\beta$$
is a path in $\bc(S)$.

If $g\geq 1$, for any outer curve there is a disjoint nonseparating one (and vice versa). Moreover, if an outer curve and a nonseparating one are disjoint, they form a nice pair. So to show that $\bc(S)$ is connected, it is enough to prove that there is a path in $\bc(S)$ between any two nonseparating curves. Let $\alpha$ and $\beta$ be two such curves.

Suppose $g=1$. As proven by Schmutz Schaller in \cite{schmutz}, there is a sequence $\gamma_0=\alpha,\dots, \gamma_m=\beta$ of nonseparating curves such that $\gamma_j$ and $\gamma_{j+1}$ intersect exactly once for every $j$. Cutting $S$ along $\gamma_j$ and $\gamma_{j+1}$  gives us a punctured disk (with at least three punctures) and it is clear that there is an outer curve $\delta_j$ disjoint from $\gamma_j$ and $\gamma_{j+1}$ (see Figure \ref{torus}).
\begin{figure}[h]
\begin{overpic}{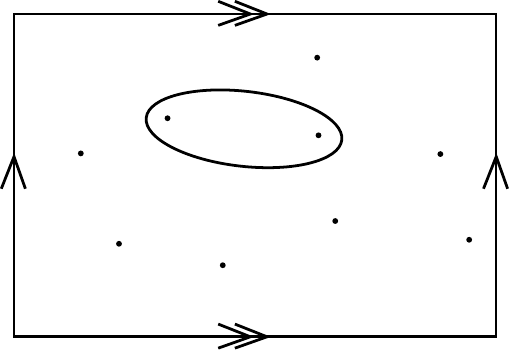}
\put (-10,30) {$\gamma_j$}
\put (105,30) {$\gamma_j$}
\put (45,26) {$\delta_j$}
\put (40,-8) {$\gamma_{j+1}$}
\put (40,72) {$\gamma_{j+1}$}
\end{overpic}
\caption{Cutting along $\gamma_j$ and $\gamma_{j+1}$.}\label{torus}
\end{figure}
So the sequence
$$\alpha=\gamma_0,\delta_0,\gamma_1,\delta_1,\gamma_2,\dots,\delta_{m-1},\gamma_m=\beta$$
is a path in $\bc(S)$.

Suppose now $g\geq 2$. By \cite[Theorem 4.4]{fm}, the graph $\nc(S)$ is connected. Let $\gamma_0=\alpha,\dots,\gamma_m=\beta$ be a path in $\nc(S)$. If for some $j$ the curves $\gamma_j$ and $\gamma_{j+1}$ do not form a nice pair, then $S\setminus(\gamma_j\cup\gamma_{j+1})$ must have two connected components, $\Sigma_1$ and $\Sigma_2$. As the genus of $S$ is at least two, one component has genus and we can pick a nonseparating curve $\delta_j$ on it. Then $(\gamma_j,\delta_j)$ and $(\delta_j,\gamma_{j+1})$ are nice pairs. By adding the curves $\delta_j$ when needed we get a path in $\bc(S)$ between $\alpha$ and $\beta$.
\end{proof}

\begin{cor}
If $\xi(S)\geq k+2$, the graph $\bg{k}{S}$ is connected.
\end{cor}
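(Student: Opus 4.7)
The proof goes by induction on $k$, following the strategy of Lemma \ref{connected}. The base case $k=1$ is exactly Lemma \ref{bconn}, since the hypothesis $\xi(S)\geq k+2$ reads $\xi(S)\geq 3$.

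For the inductive step, assume the statement for $k-1$ and let $\mu,\mu'\in V(\bg{k}{S})$, with distinguished nonseparating or outer curves $\alpha\in\mu$ and $\alpha'\in\mu'$. Apply Lemma \ref{bconn} to produce a path $\alpha=\gamma_0,\gamma_1,\dots,\gamma_m=\alpha'$ in $\bc(S)$ in which each consecutive pair $(\gamma_{i-1},\gamma_i)$ is a nice pair in $S$. For each $i$, the positive-complexity component of $S\setminus(\gamma_{i-1}\cup\gamma_i)$ has complexity $\xi(S)-2\geq k$, so one may extend $\{\gamma_{i-1},\gamma_i\}$ to a $k$-multicurve $\nu_i$ whose other $k-2$ curves lie entirely in that component; each $\nu_i$ automatically belongs to $\bg{k}{S}$. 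The chain $\mu,\nu_1,\nu_2,\dots,\nu_m,\mu'$ has the property that consecutive terms always share a common nonseparating or outer curve of $S$ (namely $\gamma_0,\gamma_1,\dots,\gamma_{m-1},\gamma_m$). Thus the corollary reduces to the following auxiliary claim: any two vertices of $\bg{k}{S}$ that both lie in $C_\gamma$, for some nonseparating or outer curve $\gamma$ of $S$, can be joined by a path in $\bg{k}{S}$.

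Via the isomorphism $C_\gamma\simeq\mg{k-1}{S\setminus\gamma}$, the two given vertices correspond to $(k-1)$-multicurves on the surface $S\setminus\gamma$, which has complexity $\xi(S)-1\geq k+1$. My plan for the auxiliary claim is to (i) move each vertex by $\bg{k}{S}$-edges, all of which preserve $\gamma$, until its restriction to $S\setminus\gamma$ contains a nonseparating or outer curve of $S\setminus\gamma$ (so that the restrictions become vertices of $\bg{k-1}{S\setminus\gamma}$), and then (ii) apply the inductive hypothesis to connect the restrictions in $\bg{k-1}{S\setminus\gamma}$ and lift the resulting path back to $\bg{k}{S}$.

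The hard part is the lifting in step (ii): an edge of $\bg{k-1}{S\setminus\gamma}$ is defined using a nice pair in $S\setminus\gamma$, but such a pair need not remain a nice pair in $S$, since regluing $\gamma$ can alter the number and topology of the complementary components of the two swapped curves. I would handle this by a case analysis according to whether $\gamma$ is nonseparating or outer and how the two sides of $\gamma$ meet the positive-complexity component of $S\setminus(\beta\cup\beta')$; the slack $\xi(S)\geq k+2$ provides enough room to detour any swap that fails the nice-pair condition in $S$ through an auxiliary vertex still in $\bg{k}{S}$.
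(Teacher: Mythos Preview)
Your strategy is essentially the paper's: induct on $k$, use Lemma \ref{bconn} as the base case, connect the distinguished curves by a path in $\bc(S)$, interpolate by $k$-multicurves containing consecutive $\gamma_i$, and invoke the inductive hypothesis on $\bg{k-1}{S\setminus\gamma}$ to join consecutive multicurves that share a common $\gamma$. The paper's proof does exactly this, only more tersely: it writes ``Because $\bg{k-1}{S\setminus \alpha}$ is connected by the induction hypothesis, there is a path in it between $\mu\setminus \alpha$ and $\eta_1\setminus \alpha$, which gives a path between $\mu$ and $\eta_1$ in $\bg{k}{S}$'' and leaves it at that.

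You are right to flag the lifting in your step (ii) as the delicate point, and your worry is genuine: a nice pair $(\delta,\delta')$ in $S\setminus\gamma$ need not be a nice pair in $S$. For instance, if $\gamma$ is nonseparating and $\delta$ is outer in $S\setminus\gamma$ bounding a pair of pants whose two punctures are exactly the two $\gamma$-punctures, then in $S$ the curve $\delta$ bounds a once-holed torus and is neither outer nor nonseparating, so $(\delta,\delta')$ fails the definition of nice pair in $S$. The paper's proof glosses over this same issue; you have been more honest about it.

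That said, your proposal is incomplete at precisely this spot: you announce a case analysis and a detouring argument but do not carry either out, and the preliminary step (i) (moving within $C_\gamma$ by $\bg{k}{S}$-edges until the restriction lies in $\bg{k-1}{S\setminus\gamma}$) is likewise only asserted. So as written you have identified the gap rather than closed it; to finish, you would need to actually produce, for each problematic swap, an explicit intermediate vertex in $\bg{k}{S}\cap C_\gamma$ realising the detour. Your intuition that the hypothesis $\xi(S)\geq k+2$ leaves room for this is reasonable, but the argument has to be written down.
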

\begin{proof} The proof is by induction on $k$. For $k=1$ it is the result of Lemma \ref{bconn}. Suppose $k\geq 2$ and let $\mu,\nu\in\bg{k}{S}$. Fix $\alpha\in \mu$ and $\beta\in \nu$ such that both $\mu\setminus \{\alpha\}$ and $\nu\setminus \{\beta\}$ still contain a nonseparating or outer curve. By the connectivity of $\bc(S)$, there is a path $\gamma_0=\alpha,\gamma_1,\dots,\gamma_m=\beta$ in $\bc(S)$. Choose a $k$-multicurve $\eta_1$ containing $\alpha$ and $\gamma_1$. Because $\bg{k-1}{S\setminus \alpha}$ is connected by the induction hypothesis, there is a path in it between $\mu\setminus \alpha$ and $\eta_1\setminus \alpha$, which gives a path between $\mu$ and $\eta_1$ in $\bg{k}{S}$. By repeating the same argument we obtain a path in $\bg{k}{S}$ between $\mu$ and $\nu$.
\end{proof}

\begin{rmk}\label{adjacent}
Any multicurve $\mu$ in $\mg{k}{S}\setminus\bg{k}{S}$ is adjacent to infinitely many multicurves $\nu$ in $\bg{k}{S}$: if $k=\xi(S)$, it is vacuously true because $V\left(\mg{k}{S}\right)=V\left(\bg{k}{S}\right)$. Otherwise:
\begin{itemize}
\item if $S$ has genus there is a nonseparating curve $\alpha$ disjoint from $\mu$ and we set $\nu:=\mu\cup\{\alpha\}\setminus \{\beta\}$ (for some $\beta\in \mu$);
\item if $S$ is a sphere, as $\mu$ does not contain any outer curve, there is a component of $S\setminus \mu$ which is a four-holed sphere with only one boundary component belonging to $\mu$. Choose any outer curve $\alpha$ in the four-holed sphere and set $\nu:=\mu\cup\{\alpha\}\setminus \{\beta\}$ (for some $\beta\in \mu$).
\end{itemize}
As in both cases there are infinitely many choices for the curve $\alpha$, we get the claim.
\end{rmk}

\subsection{(Counter)examples}\label{ex}
As mentioned in the introduction, Aramayona stated in \cite{aramayona2} some conditions ensuring that graphs built from arcs and/or curves satisfy the following superrigidity property: any {\it alternating} simplicial embedding between such graphs is induced by a map at the surface level. We refer to Aramayona's paper for the precise statement and the necessary definitions. Here we show that the multicurve graphs do not satisfy these conditions. Suppose $k\geq 2$ and consider a $(k-2)$-multicurve $\nu$ on a surface $S$ of complexity at least $k+1$. Let $\alpha$, $\beta$ and $\gamma$ be curves on $S$, mutually disjoint and disjoint from $\nu$. Then $\mu_1=\{\alpha,\beta\}\cup \nu$, $\mu_2=\{\beta,\gamma\}\cup \nu$ and $\mu_3=\{\alpha,\gamma\}\cup \nu$ form a triangle and their intersection is $\nu$, which is an extendable set of deficiency $2$. Hence, for the conditions to be satisfied, there should be an alternating circuit containing them. However, as $\mu_i\cap\mu_j$ contains $k-1$ curves for all pairs $i\neq j$, they cannot be contained in any alternating circuit.

We give now two examples showing that the hypothesis $\xi(S_2)-\xi(S_1)\leq k_2-k_1$ in Theorem \ref{embedding_intro} is necessary. Both examples are simple generalizations of similar constructions for the curve complex (see \cite{aramayona} and \cite{as}).

{\bf First example, for $\bm{k_1=k_2}$:} consider a closed surface $S$ of genus $g\geq 2$ and choose $k\leq 3g-4$. Fix a hyperbolic structure on $S$. It is known (see \cite{bs}) that the complement on $S$ of the union of all simple closed geodesics is nowhere dense. We can thus pick $n$ points $p_1,\dots,p_n$ in the complement of all simple closed curves and we have a natural way to associate to a curve on $S$ a curve on the surface $\widetilde{S}:=S\setminus \{p_1,\dots,p_n\}$: for $\alpha$ on $S$, its geodesic representative lies in $\widetilde{S}$, so we can consider the homotopy class of the geodesic representative in $\widetilde{S}$. Because this map respects disjointness, it gives a simplicial embedding $\mg{k}{S}\hookrightarrow \mg{k}{\widetilde{S}}$. $S$ is closed, so the embedding cannot be induced by an embedding of the surfaces.

{\bf Second example, for $\bm{k_2>k_1}$:} consider a closed surface $S$  of complexity at least $k+1$ (for some positive $k$) and fix a hyperbolic structure on it. Choose $d$ points $p_1,\dots,p_d$ in the complement of the union of all simple closed geodesics on $S$ to obtain an embedding $\mg{k}{S}\hookrightarrow \mg{k}{S\setminus\{p_1,\dots,p_d\}}$ as before. Clearly
$$\mg{k}{S\setminus\{p_1,\dots,p_d\}}\simeq \mg{k}{\widetilde{S}},$$
where $\widetilde{S}$ has the same genus as $S$ and $d$ boundary components, so we have a simplicial embedding $\Phi:\mg{k}{S}\rightarrow\mg{k}{\widetilde{S}}$. By gluing pairs of pants to each boundary component of $\widetilde{S}$ we get a new surface $\Sigma$ of the same genus as $S$, with $2d$ boundary components and a distinguished $d$-multicurve $\nu$ (given by the curves corresponding to the boundary components of $\widetilde{S}$). Then
\begin{align*}
\varphi:\mg{k}{S}&\rightarrow\mg{k+d}{\Sigma}\\
\mu&\mapsto \Phi(\mu)\cup \nu
\end{align*}
is a simplicial embedding which cannot be induced by an embedding at the surface level.

\section{Embeddings of the curve graph into multicurve graphs}\label{base}
In this section we will prove Theorem \ref{embedding_intro} for $k_1=1$. We actually need a slightly weaker assumption on the complexity of $S_1$. We prove the following.
\begin{prop}
Let $S_1$ and $S_2$ be surfaces such that $\xi(S_1)\geq 4$. Consider a simplicial embedding 
$$\varphi:\cc(S_1)\hookrightarrow\mg{k}{S_2},$$
where $\xi(S_2)-\xi(S_1)\leq k-1$ and $k\geq 1$. Then $\xi(S_2)-\xi(S_1)=k-1$ and:
\begin{itemize}
\item if $k=1$, $\varphi$ is an isomorphism induced by a homeomorphism $f:S_1\rightarrow S_2$;
\item if $k>1$, then there exist a $\pi_1$-injective embedding $f:S_1\rightarrow S_2$ and a $(k-1)$-multicurve $\nu$ on $S_2$ such that for any $\mu\in\mg{k_1}{S_1}$ we have
$$\varphi(\mu)=f(\mu)\cup\nu.$$
\end{itemize}
\end{prop}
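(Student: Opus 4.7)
The proof proceeds in two stages. Stage one handles the base case $k=1$: a simplicial embedding $\varphi\colon\cc(S_1)\hookrightarrow\cc(S_2)$ between curve graphs, with $\xi(S_1)\ge 4$ and $\xi(S_2)\le\xi(S_1)$, is induced by a homeomorphism (which forces $\xi(S_2)=\xi(S_1)$). This is a Shackleton/Aramayona-type rigidity statement. The plan is to prove it by adapting Ivanov's classical method: first show that topological types of curves (nonseparating, outer, separating) and of multicurves are preserved by $\varphi$; then exploit the known rigidity of $\cc(S_2)$ to pin $\varphi$ down up to a $\pi_1$-injective embedding of surfaces; and finally use $\xi(S_2)\le\xi(S_1)$ to promote this embedding to a homeomorphism. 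This is the main technical obstacle, requiring a delicate low-complexity case analysis (genus versus sphere, and the boundary value $\xi(S_1)=4$).

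For $k\ge 2$ the reduction is to produce a single $(k-1)$-multicurve $\nu$ on $S_2$ with $\nu\subset\varphi(\alpha)$ for every $\alpha\in V(\cc(S_1))$. Once $\nu$ is found, the map $\tilde\varphi\colon\cc(S_1)\to\cc(S_2\setminus\nu)$ defined by $\tilde\varphi(\alpha):=\varphi(\alpha)\setminus\nu$ is a well-defined simplicial embedding between curve graphs, and $\xi(S_2\setminus\nu)=\xi(S_2)-(k-1)\le\xi(S_1)$. Applying stage one to $\tilde\varphi$ forces $\xi(S_2)-\xi(S_1)=k-1$ and yields a homeomorphism $f\colon S_1\to S_2\setminus\nu$ inducing $\tilde\varphi$. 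Composing with the inclusion $S_2\setminus\nu\hookrightarrow S_2$ gives the $\pi_1$-injective embedding of the conclusion, and the formula $\varphi(\mu)=f(\mu)\cup\nu$ is immediate.

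To produce $\nu$, fix $\alpha\in V(\cc(S_1))$, write $\varphi(\alpha)=\{c_1,\ldots,c_k\}$, and assign to each $\beta\in\St(\alpha)\setminus\{\alpha\}$ the unique index $i(\beta)\in\{1,\ldots,k\}$ such that $\varphi(\beta)\supset\varphi(\alpha)\setminus\{c_{i(\beta)}\}$; this is well-defined because $\varphi(\alpha)\ne\varphi(\beta)$ are adjacent vertices of $\mg{k}{S_2}$ and so share exactly $k-1$ curves. The key claim is that $i$ is constant on $\St(\alpha)\setminus\{\alpha\}$, which is identified with $\cc(S_1\setminus\alpha)$ and is therefore connected by Lemma \ref{connected}. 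Suppose for contradiction that adjacent $\beta_1,\beta_2$ in this subgraph satisfy $i(\beta_1)=i\ne j=i(\beta_2)$. A direct computation of $\varphi(\beta_1)\cap\varphi(\beta_2)$ shows the two ``replacement'' curves must coincide, so $\varphi(\beta_r)=(\varphi(\alpha)\setminus\{c_{i(\beta_r)}\})\cup\{c\}$ for a single $c\notin\varphi(\alpha)$. Since $\xi(S_1)\ge 4$, the complement $S_1\setminus(\alpha\cup\beta_1\cup\beta_2)$ has positive complexity, so infinitely many curves $\gamma$ are disjoint from and distinct from $\alpha,\beta_1,\beta_2$. For each such $\gamma$, a short case analysis based on the constraints $|\varphi(\gamma)\cap\varphi(\alpha)|=|\varphi(\gamma)\cap\varphi(\beta_r)|=k-1$ together with injectivity of $\varphi$ forces $\varphi(\gamma)$ to be one of the finitely many $k$-submulticurves of the fixed $(k+1)$-multicurve $\varphi(\alpha)\cup\{c\}$, contradicting injectivity on the infinite family of such $\gamma$.

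Thus $i$ is constant on $\St(\alpha)\setminus\{\alpha\}$, and setting $\nu(\alpha):=\varphi(\alpha)\setminus\{c_{i(\alpha)}\}$ gives a $(k-1)$-submulticurve of $\varphi(\alpha)$ contained in $\varphi(\beta)$ for every $\beta$ adjacent to $\alpha$. If $\alpha\sim\beta$ in $\cc(S_1)$, then $\nu(\alpha)$ and $\nu(\beta)$ are both $(k-1)$-submulticurves of the size-$(k-1)$ set $\varphi(\alpha)\cap\varphi(\beta)$, hence coincide; by connectivity of $\cc(S_1)$ (Lemma \ref{connected} again), $\nu(\alpha)$ is independent of $\alpha$, yielding the common $\nu$ required by the reduction. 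To summarise, the hard part is the base case $k=1$; the passage $k=1\Rightarrow k\ge 2$ is a clean combinatorial reduction using only the connectivity results of Section \ref{connectedness} and elementary bookkeeping in stars of $\mg{k}{S_2}$.
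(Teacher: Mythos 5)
Your reduction for $k\ge 2$ is essentially the paper's own argument in slightly different packaging: the paper fixes a single curve $\alpha_1\in\varphi(\alpha)\cap\varphi(\beta)$ and walks backward along a path in $\St(\alpha)\setminus\{\alpha\}\simeq\cc(S_1\setminus\alpha)$, using an auxiliary fourth curve disjoint from three given ones (made available by $\xi(S_1)\ge 4$) to reach a contradiction, which is exactly your constancy of the index function $i$ together with the same cardinality bookkeeping and the same two connectivity inputs. The only real discrepancy is that for $k=1$ you propose to re-derive the rigidity of embeddings $\cc(S_1)\hookrightarrow\cc(S_2)$ by adapting Ivanov's method, but the paper simply invokes Shackleton's combinatorial rigidity theorem \cite{shackleton} as a black box, so what you describe as ``the main technical obstacle'' is in fact a citation rather than work to be done.
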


\begin{proof}
If $k=1$, $\xi(S_2)\leq \xi(S_1)$, so by Shackleton's result in \cite{shackleton} $\xi(S_2)=\xi(S_1)$ and $\varphi$ is an isomorphism induced by a homeomorphism $f:S_1\rightarrow S_2$.

Suppose $k\geq 2$. Let $\alpha\in\cc(S_1)$ and consider its image $a=\varphi(\alpha)=\{\alpha_1,\dots,\alpha_k\}$. Consider $\beta$ adjacent to $\alpha$; then $b=\varphi(\beta)$ is adjacent to $a$, so we can assume, without loss of generality, that $b=\varphi(\beta)=\{\alpha_1,\dots,\alpha_{k-1},\beta_k\}$.

Note that $\St(a)\subseteq C_{\alpha_1}\cup C_{\nu_1}$, for $\nu_1=\{\alpha_2,\dots,\alpha_k\}$.
\begin{rmk}\label{Stardec}If $k<\xi(S_2)$, given $x\in\St(a)\cap C_{\alpha_1}$ different from $a$, there exists a unique $x'\in\St(a)\cap C_{\nu_1}$ different from $a$ which is adjacent to $x$ (if $x=\{\alpha_1,\dots,\beta_i,\dots,\alpha_k\}$, then $x'=\nu_1\cup\{\beta_i\}$). Conversely, given $x'\in\St(a)\cap C_{\nu_1}$  different from $a$ there exists a unique $x\in\St(a)\cap C_{\alpha_1}$ which is adjacent to $x'$ and is different from $a$. If $k=\xi(S_2)$, given $x\in\St(a)\cap C_{\alpha_1}$ different from $a$, there exists no $x'\in\St(a)\cap C_{\nu_1}$ different from $a$ which is adjacent to $x$.
\end{rmk}

We want to show that $\varphi(\St(\alpha))\subseteq C_{\alpha_1}$. Let $\gamma\in \St(\alpha)$, different from $\beta$. Since $\St(\alpha)\setminus\{\alpha\}\simeq \cc(S_1\setminus \alpha)$, which is connected, there exists a path
$$\gamma_0=\gamma,\gamma_1,\dots,\gamma_m=\beta$$
in $\St(\alpha)\setminus\{\alpha\}$. Consider $\gamma_{m-1}$ and its image $c_{m-1}=\varphi(\gamma_{m-1})$. As $c_{m-1}$ is adjacent to $b$, by Remark \ref{Stardec} it must belong to $C_{\alpha_1}$ if $k=\xi(S_2)$. If $\xi(S_2)>k$ and we assume by contradiction $c_{m-1}\in C_{\nu_1}$, again by Remark \ref{Stardec} $c_{m-1}$ is completely determined by being adjacent to $b$. Since $\xi(S_1)\geq 4$, there exists a curve $\delta$ on $S_1$ which is disjoint from $\alpha$, $\beta$ and $\gamma_{m-1}$. So $d=\varphi(\delta)\in\varphi(\St(\alpha))$ and is adjacent both to $b$ and $c_{m-1}$. This implies that either $d=b$ or $d=c_{m-1}$, which contradicts the injectivity of $\varphi$. Thus again $c_{m-1}\in C_{\alpha_1}$. By repeating the argument, we deduce that $\gamma\in C_{\alpha_1}$, so $\varphi(\St(\alpha))\subseteq C_{\alpha_1}$.

Note that the proof works for every curve in the intersection $a\cap b$, so what we actually obtain is that $\varphi(\St(\alpha))\subseteq C_\nu$, where $\nu=a\cap b$.

Now, consider any other curve $\eta$ on $S_1$. We can construct a path between $\alpha$ and $\eta$ and by what we just proved the image of any edge in the path is in $C_\nu$. So $\im(\varphi)\subseteq C_{\nu}$. We have\\

\centerline{
\xymatrix{
\cc(S_1)\ar@{^{(}->}[r]^{\varphi}\ar[rd]_\Phi&\ar[d]_\simeq^\theta C_{\nu}\\
&  \cc(S_2\setminus \nu)
}
}

where $\theta$ is the isomorphism given by $\theta(\nu\cup\{\varepsilon\})=\varepsilon$ and $\Phi$ is defined to be the composition $\theta\circ \varphi$. Since $\varphi$ is a simplicial embedding, $\Phi$ is a simplicial embedding too. Moreover $\xi(S_2\setminus\nu)\leq\xi(S_1)$, so by Shackleton's result in \cite{shackleton} $\xi(S_2\setminus \nu)=\xi(S_1)$, i.e.\ $\xi(S_2)-\xi(S_1)=k-1$, and $\Phi$ is induced by a homeomorphism $f:S_1\rightarrow S_2\setminus\nu$. This gives us a $\pi_1$-injective map $F:S_1\rightarrow S_2$ (given by composing $f$ with the natural injection $S_2\setminus \nu\hookrightarrow S_2$) and $\varphi$ is induced by $F$ and $\nu$.
\end{proof}

\section{Rigidity of multicurves graphs}\label{rigidity}
This section is dedicated to the proof of Theorem \ref{rigidity_intro}, which is done by induction on $k$. Recall that the mapping class group $\mcg(S)$ of a surface $S$ is the group of homeomorphisms of $S$ up to isotopy. The action
\begin{align*}
\mcg(S)\times \mg{k}{S}&\rightarrow\mg{k}{S}\\
(f,\alpha)&\mapsto f(\alpha):=\mbox{homotopy class of $f_0(\alpha_0)$, where $f_0\in f$ and $\alpha_0\in \alpha$}
\end{align*}
is well defined and induces a map $\mcg(S)\rightarrow\aut\left(\mg{k}{S}\right)$. It is easy to see that this map is a group homomorphism and the main content of Theorem \ref{rigidity_intro} is that, for $\xi(S)\geq 4$, it is a bijection.

For $k=1$, $\mg{1}{S}$ is the $1$-skeleton of the curve complex, which is rigid if $\xi(S)\geq 4$ (see \cite{ivanov}, \cite{korkmaz}and \cite{luo}).

For the induction step, assume that $S$ has complexity at least four and that $$\aut\left(\mg{k-1}{S}\right)\simeq \mcg(S).$$
We will show that
$$\aut\left(\mg{k}{S}\right)\simeq \aut\left(\mg{k-1}{S}\right).$$
The idea for the proof of this fact is the same as in Margalit's proof of the rigidity of the pants graph \cite{margalit}. Define the map
\begin{align*}
\theta:E\left(\mg{k}{S}\right)&\longrightarrow V\left(\mg{k-1}{S}\right)\\
e=\alpha\beta&\longmapsto \mu=\alpha\cap \beta.
\end{align*}
It is clearly a surjection.

\begin{lemma}\label{welldef}
Let $A\in\aut(\mg{k}{S})$ and suppose $e,f\in E(\mg{k}{S})$ satisfy $\theta(e)=\theta(f)$. Then $\theta(A(e))=\theta(A(f))$.
\end{lemma}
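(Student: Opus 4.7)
My plan is to characterize the condition ``$\theta(e) = \theta(f)$'' purely by graph-theoretic invariants of $\mg{k}{S}$, so that any automorphism $A$ preserves it. First I reduce to the case of edges sharing a vertex: since $\theta(e) = \theta(f) = \mu$, both edges lie in the induced subgraph $C_\mu \simeq \cc(S\setminus\mu)$, which is connected by Lemma~\ref{connected}, and whose line graph is likewise connected since $C_\mu$ has edges (equivalently $\xi(S\setminus\mu) \geq 1$). Chaining $e$ to $f$ through a line-graph path of edges in $C_\mu$, I reduce to the case $e = \beta\beta'$, $f = \beta\beta''$ with a common vertex $\beta$. Writing $\beta = \mu\cup\{b\}$, $\beta' = \mu\cup\{c\}$, $\beta''=\mu\cup\{c'\}$, the goal becomes $A(\beta)\cap A(\beta') = A(\beta)\cap A(\beta'')$.

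The main tool is a count of common neighbors of the triple $\{\beta,\beta',\beta''\}$ in $\mg{k}{S}$. Any common neighbor $\gamma$ is of the form $(\beta\setminus\{b_l\})\cup\{y\}$, and requiring adjacency to $\beta'$ and $\beta''$ as well forces $b_l = b$ (otherwise the constraints would require $c = c'$, contradicting $\beta'\neq\beta''$); hence $\gamma\in C_\mu$ has the form $\mu\cup\{y\}$ with $y$ adjacent to each of $b,c,c'$ in $\cc(S\setminus\mu)$. When $\xi(S\setminus\mu)$ is sufficiently large this yields infinitely many common neighbors. Now assume for contradiction $\theta(A(e))\neq\theta(A(f))$, and write $A(\beta) = \{d_1,\dots,d_k\}$, $A(\beta') = (A(\beta)\setminus\{d_i\})\cup\{y_1\}$, $A(\beta'') = (A(\beta)\setminus\{d_j\})\cup\{y_2\}$ with $i\neq j$. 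The parallel case analysis for the image triple forces common neighbors to have the form $(A(\beta)\setminus\{d_l\})\cup\{y\}$ with $y = y_1 = y_2$ and $l\notin\{i,j\}$, giving at most $k-2$ such. Since $A$ restricts to a bijection between common-neighbor sets, an infinite set cannot inject into a set of size $\leq k-2$, yielding the required contradiction.

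The main obstacle will be the low-complexity regime, where $\xi(S\setminus\mu)$ is small enough that the common-neighbor count in the ``same $\theta$'' case can drop below $k-2$; this happens most acutely in the pants case $k = \xi(S)$, where $\cc(S\setminus\mu)$ is a Farey graph. In these boundary cases I would refine the invariant by counting common neighbors of each \emph{edge} of the triangle $\{\beta,\beta',\beta''\}$ separately, following Margalit's argument in~\cite{margalit}: in the Farey configuration each edge has exactly two further common neighbors by the tessellation structure, while each edge in a ``Type II'' configuration has a much larger common-neighbor set, still letting us conclude that $A$ preserves the $\theta$-partition.
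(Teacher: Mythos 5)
Your approach is genuinely different from the paper's. The paper's proof is essentially a one-liner given Section~\ref{base}: the restriction of $A$ to $C_\mu\simeq\cc(S\setminus\mu)$ is a simplicial embedding of a curve graph into $\mg{k}{S}$, and the argument there already shows that the image of such an embedding lies inside a single $C_\nu$; since both $A(e)$ and $A(f)$ are edges of $A(C_\mu)\subseteq C_\nu$, they have $\theta$-value $\nu$, done. You instead propose a self-contained combinatorial invariant (common-neighbor counts of vertex triples), in the spirit of Margalit's pants-graph argument that the section cites as its model. The line-graph reduction to edges sharing a vertex is sound, and your identification of the common neighbors of $\{\beta,\beta',\beta''\}$ in the same-$\theta$ case as exactly the vertices $\mu\cup\{y\}$ with $y$ a common neighbor of $b,c,c'$ in $\cc(S\setminus\mu)$ is correct, as is the bound of roughly $k-2$ in the different-$\theta$ case when the two replaced curves coincide.

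However, the count separation you rely on is not uniform, and the gap is wider than you acknowledge. The same-$\theta$ count can be zero whenever $b,c,c'$ fill $S\setminus\mu$. This happens not only in the Farey case $\xi(S\setminus\mu)=1$ that you flag, but already for $\xi(S\setminus\mu)=3$ whenever $b,c,c'$ are pairwise disjoint (they are then a pants decomposition of the positive-complexity part), for $\xi(S\setminus\mu)=2$, and even for large $\xi(S\setminus\mu)$ if the line-graph path you picked happens to produce $c,c'$ that together with $b$ fill the surface, since nothing in your reduction controls the relative position of $c$ and $c'$. So in the range $k\in\{\xi(S)-2,\xi(S)-1,\xi(S)\}$, and potentially elsewhere depending on the path, both sides of the inequality can be zero and the argument produces no contradiction. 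To repair this you would need to (a) choose the line-graph path more carefully, e.g.\ chaining through the link of $b$ inside $C_\mu$ so that $c,c'$ remain disjoint from $b$ and from each other when possible, and (b) actually carry out the low-complexity refinement, which you only gesture at. As written the proof does not close, whereas the paper's reduction to Section~\ref{base} handles all $\mu$ at once (a point the paper emphasizes in the remark following the lemma).
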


\begin{proof}
Note that $\theta(e)=\theta(f)=\mu$ if and only if $e,f\in C_\mu$. This implies that $A(e),A(f)\in A(C_\mu)$. The automorphism $A$ induces the map
\begin{align*}
\psi:\cc(S\setminus\mu)&\rightarrow \mg{k}{S}\\
a&\mapsto A(\{a\}\cup\mu)
\end{align*}
which is a simplicial embedding. So, by the results of Section \ref{base}, there exists a $(k-1)$-multicurve $\nu$ such that the image of $\psi$ is contained in $C_\nu$. This implies that $\theta(A(e))=\nu=\theta(A(f)).$
\end{proof}

\begin{rmk}
Since we need only the existence of $\nu$ such that $\im(\psi)\subseteq C_\nu$ and not the full base case, we do not need any condition on the complexities of the components of $S\setminus \mu$.
\end{rmk}

By Lemma \ref{welldef} any $A\in \aut\left(\mg{k}{S}\right)$ induces a well-defined map
$$\varphi(A): V\left(\mg{k-1}{S}\right)\rightarrow V\left(\mg{k-1}{S}\right)$$
given by
$$\varphi(A)(\mu)=\theta(A(e))$$
where $e$ is any edge of $\mg{k}{S}$ such that $\theta(e)=\mu$. Note that $\varphi(A)$ is actually a bijection, since it is easy to show that its inverse exists and is equal to $\varphi(A^{-1})$.

We want to prove that $\varphi(A)$ is an automorphism of $\mg{k-1}{S}$. We first show that it sends edges to edges.

\begin{lemma}
For any $A\in\aut\left(\mg{k}{S}\right)$, $\varphi(A)$ sends edges to edges.
\end{lemma}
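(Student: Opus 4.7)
I would split the argument according to whether the two curves $a_i:=\mu_i\setminus\nu$ (where $\nu:=\mu_1\cap\mu_2$ has size $k-2$) distinguishing the adjacent multicurves $\mu_1,\mu_2\in V(\mg{k-1}{S})$ can be realized disjointly. This dichotomy mirrors the two shapes an edge can take in $\mg{k-1}{S}$.

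In the disjoint sub-case, $\mu_1\cup\mu_2=\nu\cup\{a_1,a_2\}$ is a $k$-multicurve and hence a vertex of $C_{\mu_1}\cap C_{\mu_2}$. Applying $A$, its image lies in $C_{\varphi(A)(\mu_1)}\cap C_{\varphi(A)(\mu_2)}$, which by Remark \ref{int_complete} consists of the single vertex $\varphi(A)(\mu_1)\cup\varphi(A)(\mu_2)$. Hence $|\varphi(A)(\mu_1)\cap\varphi(A)(\mu_2)|=k-2$ and this union is itself a $k$-multicurve, so the two distinguishing curves of the image are disjoint and the image pair is adjacent.

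The delicate sub-case is when $a_1,a_2$ intersect, so that they form a Farey-adjacent pair in the unique complexity-one component $T$ of $S\setminus\nu$. The non-$T$ part of $S\setminus\nu$ has complexity $\xi(S)-k+1\geq 1$, so I can pick an essential curve $x$ disjoint from $\mu_1\cup\mu_2$ lying there. Set $\mu_3:=\nu\cup\{x\}$ and $v_i:=\mu_i\cup\{x\}$. Each pair $(\mu_i,\mu_3)$ is already in the disjoint sub-case (their distinguishing curves $a_i,x$ live in different components of $S\setminus\nu$), so the previous paragraph gives $A(v_i)=\varphi(A)(\mu_i)\cup\varphi(A)(\mu_3)$ as the unique vertex of $C_{\varphi(A)(\mu_i)}\cap C_{\varphi(A)(\mu_3)}$. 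Moreover, because $T$ is left untouched by further cutting along $x$, the pair $v_1 v_2$ is itself an edge in $\mg{k}{S}$; applying $A$ yields an edge $A(v_1)A(v_2)$ inside $C_{\varphi(A)(\mu_3)}$, which under $C_{\varphi(A)(\mu_3)}\simeq\cc(S\setminus\varphi(A)(\mu_3))$ corresponds to a Farey-adjacent pair $b_1,b_2$ in a complexity-one component.

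Writing $\varphi(A)(\mu_i)=(\varphi(A)(\mu_3)\setminus\{c_i\})\cup\{b_i\}$, the desired adjacency reduces to the identity $c_1=c_2$: then the image multicurves share the $(k-2)$-multicurve $\varphi(A)(\mu_3)\setminus\{c_1\}$, while the Farey condition on $b_1,b_2$ in $S\setminus(\varphi(A)(\mu_3)\setminus\{c_1\})$ follows from the homeomorphism $f_{\mu_3}\colon S\setminus\mu_3\to S\setminus\varphi(A)(\mu_3)$ provided by the base case, which preserves the complexity-one component, once one checks that reinstating $c_1$ in the complement does not modify it. To establish $c_1=c_2$, I would apply the Proposition of Section \ref{base} to the simplicial embeddings $A|_{C_{\mu_i}}$, producing homeomorphisms $f_{\mu_i}\colon S\setminus\mu_i\to S\setminus\varphi(A)(\mu_i)$; tracking $A(v_i)$ through $C_{\mu_i}$ identifies $c_i=f_{\mu_i}(x)$, and the needed coherence $f_{\mu_1}(x)=f_{\mu_2}(x)$ is forced by the Case 1 structure through $\mu_3$ together with the uniqueness built into the base case. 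This coherence step is the main obstacle of the argument.
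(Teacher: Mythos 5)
Your ``disjoint sub-case'' is exactly the paper's argument, and it is complete. The issue is that the ``Farey-adjacent sub-case'' never arises, so the complications (and the coherence step you admit is the main obstacle) are all superfluous. Recall the edge definition in the multicurve graph: two $(k-1)$-multicurves $\mu_1,\mu_2$ are adjacent precisely when they share a $(k-2)$-multicurve $\eta$ and the two remaining curves realize the \emph{minimum} possible intersection number on $S\setminus\eta$, where that minimum is $0$ as soon as $\xi(S\setminus\eta)\geq 2$, and is $1$ or $2$ only when $S\setminus\eta$ is a once-punctured torus or a four-punctured sphere. Here $\eta$ has $k-2$ curves, so $\xi(S\setminus\eta)=\xi(S)-k+2\geq 2$ because $k\leq\xi(S)$. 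Hence the minimum is $0$, the distinguishing curves of any edge of $\mg{k-1}{S}$ are disjoint, and $\mu_1\cup\mu_2$ is always a $k$-multicurve. (Intersecting distinguishing curves only occur for edges of the pants graph $\mg{\xi(S)}{S}$, which is not the graph you are examining in this lemma.) Once you delete the second sub-case, what remains is precisely the paper's proof: $C_{\mu_1}\cap C_{\mu_2}=\{\mu_1\cup\mu_2\}$, apply $A$, invoke Lemma~\ref{welldef} to land inside $C_{\varphi(A)(\mu_1)}\cap C_{\varphi(A)(\mu_2)}$, then Remark~\ref{int_complete} and injectivity of $\varphi(A)$ force this intersection to be a single $k$-multicurve, which exhibits the image pair as an edge.
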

\begin{proof}
Suppose $\mu\nu$ is an edge of $\mg{k-1}{S}$. This means that the intersection $C_\mu\cap C_\nu$ is not empty, so it contains exactly one vertex $\mu\cup\nu$. As a consequence $A(C_\mu)$ and $A(C_\nu)$ intersect in exactly one vertex too and thus $C_{\varphi(A)(\mu)}\cap C_{\varphi(A)(\nu)}$ is not empty. By remark \ref{int_complete}, this means that either $C_{\varphi(A)(\mu)}=C_{\varphi(A)(\nu)}$ or $C_{\varphi(A)(\mu)}\cap C_{\varphi(A)(\nu)}$ is one vertex. But if $C_{\varphi(A)(\mu)}=C_{\varphi(A)(\nu)}$, then $\varphi(A)(\mu)=\varphi(A)(\nu)$, which contradicts the injectivity of $\varphi(A)$. Hence $C_{\varphi(A)(\mu)}\cap C_{\varphi(A)(\nu)}$ is one vertex, so $\varphi(A)(\mu)\cup\varphi(A)(\nu)$ is a $k$-multicurve, i.e.\ that $\varphi(A)(\mu)\varphi(A)(\nu)$ is an edge of $\mg{k-1}{S}$.
\end{proof}

We have seen $\varphi(A)$ is a simplicial map of $\mg{k-1}{S}$ to itself and it is a bijection. As the same holds for $\varphi(A)^{-1}=\varphi(A^{-1})$, $\varphi(A)$ is an automorphism of $\mg{k-1}{S}$. Therefore we have a map
$$\varphi:\aut\left(\mg{k}{S}\right)\rightarrow \aut\left(\mg{k-1}{S}\right).$$
We claim that $\varphi$ is a group isomorphism.

\begin{lemma}
The map $\varphi$ is a group homomorphism.
\end{lemma}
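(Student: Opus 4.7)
The plan is to verify the homomorphism property by unwinding the definition of $\varphi$ on a single arbitrary vertex. Concretely, I would fix $A, B \in \aut(\mg{k}{S})$ and a vertex $\mu \in V(\mg{k-1}{S})$, and show $\varphi(A\circ B)(\mu) = \varphi(A)(\varphi(B)(\mu))$. The essential tool is Lemma \ref{welldef}: it says that to evaluate $\varphi(X)$ at a vertex $\lambda$, I am free to pick any edge $e$ with $\theta(e)=\lambda$ and compute $\theta(X(e))$. That freedom is what makes the argument almost automatic.

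First, I pick an edge $e \in E(\mg{k}{S})$ with $\theta(e) = \mu$; such an edge exists because $\theta$ is surjective. By the definition of $\varphi(B)$, we have $\varphi(B)(\mu) = \theta(B(e))$. The crucial observation is then that $B(e)$ is itself an edge of $\mg{k}{S}$ and it lies over $\varphi(B)(\mu)$ under $\theta$. So Lemma \ref{welldef} allows me to use $B(e)$ as the edge representing $\varphi(B)(\mu)$ when I next apply $\varphi(A)$, giving
\[
\varphi(A)\bigl(\varphi(B)(\mu)\bigr) = \theta\bigl(A(B(e))\bigr) = \theta\bigl((A\circ B)(e)\bigr) = \varphi(A\circ B)(\mu).
\]
Since $\mu$ was arbitrary, this proves $\varphi(A)\circ \varphi(B) = \varphi(A\circ B)$ as automorphisms of $\mg{k-1}{S}$.

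Honestly, there is no real obstacle in this step: all the substantive work has already been carried out in Lemma \ref{welldef}, which required invoking the base-case rigidity result from Section \ref{base} to show that the output vertex does not depend on the chosen edge. Once well-definedness is in hand, the homomorphism property is pure functoriality: $\varphi$ is built by transporting edges through $\theta$, and both $A$ and $B$ preserve the edge set while $\theta$ is a purely combinatorial operation commuting with automorphisms in the appropriate sense. The only thing to be careful about is writing the evaluation in a way that lets the same edge $e$ propagate through both compositions, which is why one picks $e$ once at the beginning and then follows the trajectory $e \mapsto B(e) \mapsto A(B(e))$.
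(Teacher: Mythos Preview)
Your proof is correct and follows essentially the same approach as the paper: pick an edge $e$ over $\mu$, compute $\varphi(AB)(\mu)=\theta(AB(e))$, and use that $B(e)$ is a valid edge over $\varphi(B)(\mu)$ to rewrite this as $\varphi(A)(\varphi(B)(\mu))$. The paper's argument is the same chain of equalities, just written slightly more compactly.
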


\begin{proof}
Consider $A,B\in\aut\left(\mg{k}{S}\right)$ and let $\mu$ be $(k-1)$-multicurve. Suppose $\mu=\theta(e)$. Then
\begin{gather*}
\varphi(AB)(\mu)=\theta(AB(e))=\theta(A(B(e)))=\varphi(A)(\theta(B(e)))=\\
=\varphi(A)(\varphi(B)(\theta(e)))=(\varphi(A)\circ\varphi(B))(\mu),
\end{gather*}
which shows that $\varphi(AB)=\varphi(A)\circ\varphi(B)$, as claimed.
\end{proof}

\begin{lemma}
The map $\varphi$ is injective.
\end{lemma}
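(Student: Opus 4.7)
The plan is to show that the kernel of $\varphi$ is trivial: if $A \in \aut\left(\mg{k}{S}\right)$ satisfies $\varphi(A) = \mathrm{id}$, I will prove $A(\alpha) = \alpha$ for every $k$-multicurve $\alpha$. The strategy is to pin down $A(\alpha)$ by comparing it against the $A$-images of enough neighbors of $\alpha$ in $\mg{k}{S}$, using that $\varphi(A) = \mathrm{id}$ forces every such image-edge to have the same $\theta$-value as the original edge.

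Fix a $k$-multicurve $\alpha$, and for each $\gamma \in \alpha$ set $\mu_\gamma := \alpha \setminus \{\gamma\}$. The first step is to check that for every $\gamma \in \alpha$ there exists an edge $e_\gamma$ of $\mg{k}{S}$ incident to $\alpha$ with $\theta(e_\gamma) = \mu_\gamma$. This is straightforward: the component of $S \setminus \mu_\gamma$ containing $\gamma$ has positive complexity (because $\gamma$ is essential in it), so one can pick a curve $\gamma'$ in that component intersecting $\gamma$ minimally, and then $e_\gamma$ joins $\alpha$ to $\mu_\gamma \cup \{\gamma'\}$. The hypothesis $\varphi(A)(\mu_\gamma) = \mu_\gamma$ now gives $\theta(A(e_\gamma)) = \mu_\gamma$, which by the very definition of $\theta$ means that $\mu_\gamma$ is contained in both endpoints of $A(e_\gamma)$; in particular $\mu_\gamma \subseteq A(\alpha)$.

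Letting $\gamma$ range over the $k$ curves of $\alpha$, I conclude
$$A(\alpha) \supseteq \bigcup_{\gamma \in \alpha} \mu_\gamma = \alpha,$$
where the last equality uses $k \geq 2$ (any $\delta \in \alpha$ lies in $\mu_\gamma$ for every $\gamma \neq \delta$, and such $\gamma$ exists). Since $|A(\alpha)| = k = |\alpha|$, this forces $A(\alpha) = \alpha$; so $A$ fixes every vertex of $\mg{k}{S}$ and $A = \mathrm{id}$, proving injectivity. The argument is really just a definitional unwinding once the map $\theta$ is in hand, and I do not expect any genuine obstacle: the only verification needed is the existence of edges at $\alpha$ realizing each $(k-1)$-submulticurve of $\alpha$ as a $\theta$-value, and this is immediate from the definition of the multicurve graph together with the assumption $k \leq \xi(S)$.
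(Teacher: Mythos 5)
Your proof is correct and follows the same route as the paper's: you produce, for each $\gamma \in \alpha$, an edge at $\alpha$ with $\theta$-value $\mu_\gamma = \alpha \setminus \{\gamma\}$, use $\varphi(A) = \mathrm{id}$ to conclude $\mu_\gamma \subseteq A(\alpha)$, and take the union over $\gamma$ to get $A(\alpha) = \alpha$. The only difference is that you explicitly justify the existence of the edges $e_\gamma$ and flag that the union equals $\alpha$ only when $k \geq 2$ (which holds in the induction step), small clarifications the paper leaves implicit.
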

\begin{proof}
Since $\varphi$ is a group homomorphism, it is enough to prove that its kernel is reduced to the identity. Suppose that $\varphi(A)$ it the identity on $\mg{k-1}{S}$ and let $\mu=\{\alpha_1\dots,\alpha_k\}$ be a $k$-multicurve. Define $\mu_i$ to be $\mu\setminus\{\alpha_i\}$, for any $i$ between $1$ and $k$. Then $\mu_i=\theta(e_i)$, where we can choose $e_i$ incident to $\alpha$ for every $i$ (say $e_i=\mu\nu_i$). Since by hypothesis $\varphi(A)$ is the identity, we obtain
$$\mu_i=\varphi(A(\mu_i))=\theta(A(e_i))=A(\mu)\cap A(\nu_i)$$
which means that $\mu_i\subseteq A(\mu)$ for every $i$. So $A(\mu)=\mu$, that is, $A$ is the identity in $\mg{k}{S}$. 
\end{proof}

\begin{lemma}
The map $\varphi$ is surjective.
\end{lemma}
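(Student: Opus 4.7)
The plan is to use the induction hypothesis to produce a preimage of any $B\in\aut(\mg{k-1}{S})$ in the most natural possible way, namely via a mapping class $f\in\mcg(S)$ that induces $B$, and then to verify that the mapping class action on $\mg{k}{S}$ yields an automorphism $A$ which $\varphi$ sends back to $B$.

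More precisely, fix $B\in\aut(\mg{k-1}{S})$. By the inductive hypothesis $\aut(\mg{k-1}{S})\simeq\mcg(S)$, so there is a mapping class $f\in\mcg(S)$ such that $B(\mu)=f(\mu)$ for every $(k-1)$-multicurve $\mu$. Since $\mcg(S)$ acts on $k$-multicurves as well, $f$ induces an element $A\in\aut(\mg{k}{S})$; concretely $A(\eta)=f(\eta)$ for every $k$-multicurve $\eta$, and $A$ sends an edge $\alpha\beta$ of $\mg{k}{S}$ to the edge $f(\alpha)f(\beta)$.

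It now remains to check that $\varphi(A)=B$. Given any $(k-1)$-multicurve $\mu$, choose an edge $e=\alpha\beta$ of $\mg{k}{S}$ with $\theta(e)=\alpha\cap\beta=\mu$ (such an $e$ exists because $\theta$ is surjective). Then
\begin{equation*}
\varphi(A)(\mu)=\theta(A(e))=\theta\bigl(f(\alpha)f(\beta)\bigr)=f(\alpha)\cap f(\beta)=f(\alpha\cap\beta)=f(\mu)=B(\mu),
\end{equation*}
where the third and fifth equalities use that $f$ is induced by a homeomorphism and therefore commutes with intersection of multicurves. Hence $\varphi(A)=B$, proving surjectivity.

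There is essentially no obstacle here: the argument is pure naturality, combining the inductive identification $\aut(\mg{k-1}{S})\simeq\mcg(S)$ with the obvious functoriality of the $\mcg(S)$-action on multicurves of any fixed size. The only point to notice is that the natural map $\mcg(S)\to\aut(\mg{k}{S})$ is well defined (already remarked at the start of Section \ref{rigidity}) and that $\theta$ intertwines this action with the mapping class action on $(k-1)$-multicurves, which is immediate since a homeomorphism respects both disjointness and taking intersections of multicurves.
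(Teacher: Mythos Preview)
Your proof is correct and follows essentially the same approach as the paper: the paper phrases the argument by showing that the natural maps $F:\mcg(S)\to\aut(\mg{k}{S})$ and $G:\mcg(S)\to\aut(\mg{k-1}{S})$ satisfy $G=\varphi\circ F$, and then uses the surjectivity of $G$ from the induction hypothesis; your element-by-element verification that $\varphi(A)=B$ is exactly the computation the paper does to establish $G=\varphi\circ F$.
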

\begin{proof}
Let $F:\mcg(S)\rightarrow\aut\left(\mg{k}{S}\right)$ and $G:\mcg(S)\rightarrow\aut\left(\mg{k-1}{S}\right)$ be the natural maps from the mapping class group to the multicurve graphs. By the induction hypothesis we know that $G$ is surjective. To show that $\varphi$ is surjective it then enough to prove that $G=\varphi\circ F$. Let $f$ be any mapping class and $\mu=\{a_1,\dots,a_{k-1}\}$ any $(k-1)$-multicurve. Suppose $e=\alpha\beta$ is an edge of $\mg{k}{S}$ such that $\theta(e)=\mu$; say $\alpha=\mu\cup\{a\}$ and $\beta=\mu\cup \{b\}$. Then
\begin{gather*}
\varphi(F(f))(\mu)=\theta\left(F(f)(\alpha)F(f)(\beta)\right)=F(f)(\alpha)\cap F(f)(\beta)=\\=\{f(a_1),\dots,f(a_{k-1}),f(a)\}\cap\{f(a_1),\dots,f(a_{k-1}),f(b)\}=\\
=\{f(a_1),\dots,f(a_{k-1})\}=G(f)(\mu).
\end{gather*}
\end{proof}
So $\aut\left(\mg{k}{S}\right)\simeq \aut\left(\mg{k-1}{S}\right)\simeq \mcg(S)$.\\

\section{Proof of Theorem \ref{embedding_intro}}\label{induction}
We prove Theorem \ref{embedding_intro} by induction on $k_1$. The base case, for $k_1=1$, is proven in Section \ref{base}.

For the induction step, assume the theorem holds until $k_1-1$ and consider a simplicial embedding $\varphi:\mg{k_1}{S_1}\hookrightarrow \mg{k_2}{S_2}$ satisfying the hypothesis of Theorem \ref{embedding_intro}. If $\alpha$ is a nonseparating curve on $S_1$, $\varphi$ induces a simplicial embedding
$$\varphi_\alpha:\mg{k_1-1}{S_1\setminus \alpha}\simeq C_\alpha\hookrightarrow\mg{k_2}{S_2}$$
which satisfies the hypothesis of Theorem \ref{embedding_intro}. So by induction we know that $$\xi(S_2)-\xi(S_1\setminus \alpha)=k_2-(k_1-1),$$
thus $$\xi(S_2)-\xi(S_1)=k_2-k_1.$$
Define $d:=k_2-k_1$. The induction hypothesis also tells us that $\varphi_\alpha$ is induced by a $\pi_1$-injective embedding $f_\alpha$ of $S_1\setminus\alpha$ into $S_2$ and a $(d+1)$-multicurve $\nu_\alpha$ of $S_2$.

If $\alpha$ is an outer curve on $S_1$ which cuts off a pair of pants $P_\alpha$, the same holds if we replace $S_1\setminus \alpha$ by $S_1\setminus\alpha\setminus P_\alpha$.

For ease of notation, in what follows we interpret a $d$-multicurve when $d=0$ as the empty set. Moreover, given a possibly disconnected surface $S$, we will denote by $\pos{S}$ the union of the components of $S$ which are not pairs of pants. For instance, if $\alpha$ is an outer curve on $S_1$ cutting off a pair of pants $P_\alpha$, then $\pos{S_1\setminus \alpha}=S_1\setminus \alpha\setminus P_\alpha$.

\begin{lemma}\label{nicepair}
If $\alpha$ and $\beta$ form a nice pair, then there exists a $\pi_1$-injective embedding $f:S_1\rightarrow S_2$ and a $d$-multicurve $\nu$ on $S_2$ such that $\varphi$ is induced by $f$ and $\nu$ on  $C_\alpha\cup C_\beta$. Moreover, if $d=0$, $f$ can be taken to be a homeomorphism.
\end{lemma}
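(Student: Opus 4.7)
The plan is to analyze $\varphi$ on the intersection $C_\alpha\cap C_\beta$ from two perspectives (via $\varphi_\alpha$ and via $\varphi_\beta$) and use uniqueness to match up the inducing data, then glue $f_\alpha$ and $f_\beta$ into a single $\pi_1$-injective map $f:S_1\to S_2$.

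First, $C_\alpha\cap C_\beta$ is isomorphic to $\mg{k_1-2}{\Sigma_0}$, where $\Sigma_0:=\pos{S_1\setminus(\alpha\cup\beta)}$ has complexity $\xi(S_1)-2\geq k_1+2$. The restriction $\varphi|_{C_\alpha\cap C_\beta}$ is induced, via $\varphi_\alpha$, by the pair $(f_\alpha|_{\Sigma_0},\,\nu_\alpha\cup\{f_\alpha(\beta)\})$ and, via $\varphi_\beta$, by $(f_\beta|_{\Sigma_0},\,\nu_\beta\cup\{f_\beta(\alpha)\})$; each of the two multicurves has size $d+2$. Such an inducing pair is unique: the multicurve part is recovered as the intersection $\bigcap_{\mu'}\varphi(\mu')$ over the vertices of $\mg{k_1-2}{\Sigma_0}$ (since $\xi(\Sigma_0)$ is large enough that every curve is avoided by some $(k_1-2)$-multicurve, so the intersection of the $f(\mu')$'s is empty), and the embedding part is then determined on curves, hence up to isotopy. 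This yields $\nu_\alpha\cup\{f_\alpha(\beta)\}=\nu_\beta\cup\{f_\beta(\alpha)\}$ and $f_\alpha|_{\Sigma_0}=f_\beta|_{\Sigma_0}$ up to isotopy.

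The main obstacle is to rule out the degenerate possibility $f_\alpha(\beta)=f_\beta(\alpha)$, which would collapse the multicurve equality above to $\nu_\alpha=\nu_\beta=:\nu_0$ instead of the desired swap of a single curve. Suppose this occurred. Since $\xi(\pos{S_1\setminus\alpha})=\xi(S_2\setminus\nu_0)=\xi(S_1)-1$, both $f_\alpha$ and $f_\beta$ are in fact homeomorphisms onto the (unique) positive-complexity component $M$ of $S_2\setminus\nu_0$. Pick a multicurve $\mu=\alpha\cup\eta\in C_\alpha\setminus C_\beta$, which exists by the complexity hypothesis since one can find a $(k_1-1)$-multicurve on $\pos{S_1\setminus\alpha}$ avoiding $\beta$. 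Set $\rho:=f_\beta^{-1}(f_\alpha(\eta))$, a $(k_1-1)$-multicurve on $\pos{S_1\setminus\beta}$; since $\beta$ is not a curve on $\pos{S_1\setminus\beta}$, we have $\beta\notin\rho$. Then $\varphi(\beta\cup\rho)=f_\beta(\rho)\cup\nu_\beta=f_\alpha(\eta)\cup\nu_\alpha=\varphi(\mu)$, and the injectivity of $\varphi$ forces $\beta\in\mu$, contradicting $\mu\in C_\alpha\setminus C_\beta$.

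With $f_\alpha(\beta)\neq f_\beta(\alpha)$ established, the common $(d+2)$-multicurve $\tau:=\nu_\alpha\cup\{f_\alpha(\beta)\}=\nu_\beta\cup\{f_\beta(\alpha)\}$ contains the two distinct (hence disjoint) curves $f_\alpha(\beta)$ and $f_\beta(\alpha)$. Set $\nu:=\tau\setminus\{f_\alpha(\beta),f_\beta(\alpha)\}$, a $d$-multicurve on $S_2$, so that $\nu_\alpha=\nu\cup\{f_\beta(\alpha)\}$ and $\nu_\beta=\nu\cup\{f_\alpha(\beta)\}$. Define $f:S_1\to S_2$ by setting $f|_{\pos{S_1\setminus\alpha}}:=f_\alpha$ and $f|_{\pos{S_1\setminus\beta}}:=f_\beta$, extended across $\alpha$ and $\beta$ by $f(\alpha):=f_\beta(\alpha)$ and $f(\beta):=f_\alpha(\beta)$; these agree on $\Sigma_0$ by the first step, and on any pair of pants $P_\alpha$ (if $\alpha$ is outer), which lies in $\pos{S_1\setminus\beta}$ by the nice-pair hypothesis, and symmetrically for $P_\beta$. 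One checks directly that $\varphi(\mu)=f(\mu)\cup\nu$ for every $\mu\in C_\alpha\cup C_\beta$. Finally, if $d=0$ then $\xi(S_1)=\xi(S_2)$, so the $\pi_1$-injective map $f$ between surfaces of equal complexity is a homeomorphism.
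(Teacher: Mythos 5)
Your overall strategy---restrict to the overlap, match inducing data, rule out the degenerate coincidence $f_\alpha(\beta)=f_\beta(\alpha)$, then glue---mirrors the paper's proof. Your contradiction argument against $f_\alpha(\beta)=f_\beta(\alpha)$ is a nice touch: it treats $k_1=2$ and $k_1\geq 3$ uniformly via surjectivity, whereas the paper handles $k_1=2$ by a similar counting argument but derives the $k_1\geq 3$ case separately from injectivity of the glued $f$.

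However, there is a genuine gap in your first step when $k_1=2$. You claim that restricting $\varphi$ to $C_\alpha\cap C_\beta\simeq\mg{k_1-2}{\Sigma_0}$ determines both the multicurve part and the embedding part, and in particular yields $f_\alpha|_{\Sigma_0}=f_\beta|_{\Sigma_0}$ up to isotopy. But when $k_1=2$, the graph $\mg{0}{\Sigma_0}$ has a single vertex (the empty multicurve), so $C_\alpha\cap C_\beta$ is the single vertex $\{\alpha,\beta\}$. Its image under $\varphi$ gives the identity $\nu_\alpha\cup\{f_\alpha(\beta)\}=\nu_\beta\cup\{f_\beta(\alpha)\}$, which you need, but carries \emph{no} information about the action of $f_\alpha$ or $f_\beta$ on curves of $\Sigma_0$; your sentence ``the embedding part is then determined on curves'' is vacuous here, and the conclusion $f_\alpha|_{\Sigma_0}=f_\beta|_{\Sigma_0}$ does not follow. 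Since the gluing definition of $f$ relies on this agreement, the construction is unjustified for $k_1=2$. The paper repairs exactly this by looking at \emph{edges} between $C_\alpha$ and $C_\beta$: for a curve $\gamma$ disjoint from $\alpha\cup\beta$, the multicurves $\{\alpha,\gamma\}$ and $\{\beta,\gamma\}$ are adjacent, so $\varphi(\{\alpha,\gamma\})=\nu_\alpha\cup\{f_\alpha(\gamma)\}$ and $\varphi(\{\beta,\gamma\})=\nu_\beta\cup\{f_\beta(\gamma)\}$ differ in exactly one curve; together with $\nu_\alpha\neq\nu_\beta$ this forces $f_\alpha(\gamma)=f_\beta(\gamma)$, and then rigidity of the curve graph of $\Sigma_0$ gives the agreement of the maps up to isotopy. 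You should add this argument (or an equivalent one) to cover the $k_1=2$ case.

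A smaller point: the final sentence, that a $\pi_1$-injective embedding between surfaces of equal complexity is automatically a homeomorphism, is stated without justification. It is true in this setting, but the paper instead records that by the induction hypothesis each of $f_\alpha$, $f_\beta$ may be chosen to be a homeomorphism onto its target component when $d=0$, and then the glued map is a homeomorphism; you may wish to phrase it that way rather than appealing to a general topological principle.
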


Note that if $\alpha, \beta$ form a nice pair, then $\pos{S_1\setminus(\alpha\cup\beta)}$ has complexity $\xi(S_1)-2\geq k_1+2\geq4$ and hence satisfies Theorem \ref{rigidity_intro}. In particular, the curve graph $\cc(\pos{S_1\setminus(\alpha\cup\beta)})$ and, for $k_1\geq3$, the multicurve graph $\mg{k_1-2}{\pos{S_1\setminus(\alpha\cup\beta)}}$ are rigid. 

Assuming Lemma \ref{nicepair}, fix a nice pair $(\alpha,\beta)$ and consider $f$ and $\nu$ obtained via the lemma. We will prove:

\begin{lemma}\label{nonsep}
The embedding $\varphi$ is induced by $f$ and $\nu$ on $\bg{k_1}{S_1}$.
\end{lemma}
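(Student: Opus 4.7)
My strategy is to propagate the formula $\varphi(\mu) = f(\mu) \cup \nu$, known on $C_\alpha \cup C_\beta$ from Lemma \ref{nicepair}, to every vertex of $\bg{k_1}{S_1}$. Call a nonseparating or outer curve $\gamma$ on $S_1$ \emph{good} if $\varphi(\mu) = f(\mu) \cup \nu$ holds for all $\mu \in C_\gamma$. Then $\alpha$ and $\beta$ are good, and since every vertex of $\bg{k_1}{S_1}$ lies in $C_\gamma$ for some nonseparating or outer $\gamma$, it suffices to show that every such curve is good. Because $\bc(S_1)$ is connected with edges corresponding to nice pairs (Lemma \ref{bconn}), it is enough to prove that goodness propagates along nice pairs.

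Assume $\gamma$ is good and $(\gamma, \gamma')$ is a nice pair. Applying Lemma \ref{nicepair} to $(\gamma, \gamma')$ yields $(f', \nu')$ with $\varphi(\mu) = f'(\mu) \cup \nu'$ on $C_\gamma \cup C_{\gamma'}$. For every $(k_1-1)$-multicurve $\mu''$ on $S_1\setminus\gamma$, comparing the two expressions for $\varphi(\gamma \cup \mu'')$ gives
\[
\{f(\gamma)\} \cup f(\mu'') \cup \nu \;=\; \{f'(\gamma)\} \cup f'(\mu'') \cup \nu',
\]
where the unions are disjoint thanks to $\pi_1$-injectivity of $f, f'$ and the placement of $\nu, \nu'$ outside their respective images. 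Intersecting over all $\mu''$, which is meaningful since $\xi(S_1\setminus\gamma) \geq k_1+3$ permits abundant variation, isolates the constant term $\{f(\gamma)\} \cup \nu = \{f'(\gamma)\} \cup \nu'$, and subtracting this common term yields $f(\mu'') = f'(\mu'')$ as multicurves for every $\mu''$. Varying $\mu''$ to single out individual curves then shows $f(c) = f'(c)$ for every curve $c$ on $S_1\setminus\gamma$; in particular, since $\gamma'$ is disjoint from $\gamma$, $f(\gamma') = f'(\gamma')$.

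The main obstacle is to upgrade the multiset equation $\{f(\gamma)\} \cup \nu = \{f'(\gamma)\} \cup \nu'$ to the separate identities $f(\gamma) = f'(\gamma)$ and $\nu = \nu'$, ruling out the pathological swap $f(\gamma) \in \nu'$ and $f'(\gamma) \in \nu$. I plan to argue topologically: having $f|_{S_1\setminus\gamma} = f'|_{S_1\setminus\gamma}$ on the curve graph forces these to be isotopic as $\pi_1$-injective embeddings $S_1\setminus\gamma \to S_2$, so their image subsurfaces coincide up to isotopy, and the extension of this data to a $\pi_1$-injective embedding of $S_1$ together with a complementary $d$-multicurve is essentially unique, forcing $(f,\nu) = (f',\nu')$. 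Granted this, $\varphi(\mu) = f'(\mu) \cup \nu' = f(\mu) \cup \nu$ for every $\mu \in C_{\gamma'}$, so $\gamma'$ is good, and iterating along any path in $\bc(S_1)$ starting from $\alpha$ (guaranteed by Lemma \ref{bconn}) shows every nonseparating or outer curve is good, completing the proof.
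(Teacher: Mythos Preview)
Your overall strategy matches the paper's: propagate the formula along paths in $\bc(S_1)$, using Lemma~\ref{nicepair} at each edge. The technical content is entirely in the propagation step, and here your argument diverges from the paper's and has a genuine gap.

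The paper does not derive $f=f'$ on $\pos{S_1\setminus\gamma}$ combinatorially. Instead it uses the internal structure of the proof of Lemma~\ref{nicepair}: both $f$ (from the pair $(\alpha,\beta)$) and $f'$ (from the pair $(\alpha,\gamma_1)$) are built by extending the \emph{same} map $f_\alpha$, so $f=f'$ on $\pos{S_1\setminus\alpha}$ as actual maps, and $\nu=\nu'$ follows immediately. Your route recovers only the curve-level equality $f(c)=f'(c)$ for $c$ in $S_1\setminus\gamma$, and then you must bootstrap.

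The gap is in this bootstrap. Your claim that ``the extension of this data to a $\pi_1$-injective embedding of $S_1$ \dots\ is essentially unique'' is false when $\gamma$ is outer: two embeddings $S_1\to S_2$ can agree on $\pos{S_1\setminus\gamma}$ and yet differ by a Dehn twist about $\gamma$. Such a twist fixes $f(\gamma)$ (so you can still get $\nu=\nu'$), but it changes $f$ on every curve crossing $\gamma$. Since a multicurve $\mu\in C_{\gamma'}$ can contain curves crossing $\gamma$, you have not established $f(\mu)=f'(\mu)$ for all such $\mu$, and your conclusion that $\gamma'$ is good does not follow. This is precisely the obstacle the paper isolates and then removes by explicitly producing a curve $\delta$ that intersects $\gamma$ and satisfies $f(\delta)=f'(\delta)$, via a case analysis on whether $\beta$ and $\gamma_1$ are disjoint (and, when they are not, via an auxiliary path in $\bc(\pos{S_1\setminus\alpha})$). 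Your sketch does not supply any substitute for this step.
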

We will end the proof showing that if $\varphi$ is induced by $f$ and $\nu$ on $\bg{k_1}{S_1}$, then it is induced by $f$ and $\nu$ everywhere.

\begin{proof}[Proof of Lemma \ref{nicepair}]
As seen at the beginning of this section, by induction we get two $\pi_1$-injective embeddings $f_\alpha:\pos{S_1\setminus \alpha}\rightarrow S_2$, $f_\beta:\pos{S_1\setminus \beta}\rightarrow S_2$ and two $(d+1)$-multicurves on $S_2$ such that $\varphi$ is induced by $f_\alpha, \nu_\alpha$ on $C_\alpha$ and by $f_\beta,\nu_\beta$ on $C_\beta$.

Our objective is to show that $\nu_\alpha\cap\nu_\beta$ is the $d$-multicurve we are looking for (in particular, it is empty when $d=0$) and that $f_\alpha$ and $f_\beta$ define a $\pi_1$-injective embedding of $S_1$ into $S_2$ which is a homeomorphism when $d=0$.

{\bf Case $\bm{k_1=2}$:} since $\varphi(\{\alpha,\beta\})=\nu_\alpha\cup \{f_\alpha(\beta)\}=\nu_\beta\cup \{f_\beta(\alpha)\}$, $\nu_\alpha\cap\nu_\beta$ is a $d$-multicurve if (and only if) $\nu_\alpha\neq\nu_\beta$. So suppose by contradiction that $\nu_\alpha=\nu_\beta$. We know that
$$\varphi\left(C_\alpha\simeq \cc(\pos{S_1\setminus \alpha)}\right)\subseteq C_{\nu_\alpha}\simeq \cc(\pos{S_2\setminus \nu_\alpha})$$
and
$$\varphi\left(C_\beta\simeq \cc(\pos{S_1\setminus \beta)}\right)\subseteq C_{\nu_\alpha}\simeq \cc(\pos{S_2\setminus \nu_\alpha}).$$
But if we compute the complexities we have
$$\xi(\pos{S_2\setminus \nu_\alpha})=\xi(S_2)-(d+1)=(\xi(S_2)-d)-1=\xi(S_1)-1=\xi(\pos{S_1\setminus \alpha})=\xi(\pos{S_1\setminus\beta}).$$
By a result in \cite{shackleton}, the inclusions are actually equalities, i.e.\ $\varphi(C_\alpha)=\varphi(C_\beta)=C_{\nu_\alpha}.$

Choose $\mu\in C_{\nu_\alpha}$ different from $\varphi(\{\alpha,\beta\})$. Then there exist $a\in C_\alpha$ and $b\in C_\beta$ such that $\varphi(a)=\varphi(b)=\mu$. By the injectivity of $\varphi$ this means that $a=b\in C_\alpha\cap C_\beta$. But the only multicurve in the intersection is $\{\alpha,\beta\}$ and this contradicts the fact that $\mu\neq \varphi(\{\alpha,\beta\})$. So $\nu_\alpha$ and $\nu_\beta$ must be different and $\nu:=\nu_\alpha\cap\nu_\beta$ is a $d$-multicurve. In particular, if $d=0$, $\nu_\alpha$ and $\nu_\beta$ are distinct single curves and $\nu$ is empty.

Consider any curve $\gamma$ on $S_1\setminus(\alpha\cup\beta)$. As $\{\alpha,\gamma\}$ and $\{\beta,\gamma\}$ are multicurves joined by an edge, so are their images via $\varphi$. We have
\begin{gather*}\varphi(\{\alpha,\gamma\})=\nu_\alpha\cup \{f_\alpha(\gamma)\}\\
\varphi(\{\beta,\gamma\})=\nu_\beta\cup \{f_\beta(\gamma)\}
\end{gather*}
and since $\nu_\alpha\neq \nu_\beta$, we must have $f_\alpha(\gamma)=f_\beta(\gamma)$. Thus $f_\alpha$ and $f_\beta$ induce the same map
$$\cc(\pos{S_1\setminus (\alpha\cup\beta)})\rightarrow \cc(\pos{S_2\setminus\varphi(\{\alpha,\beta\})}).$$
As $\xi(\pos{S_1\setminus (\alpha\cup\beta)})=\xi(\pos{S_2\setminus\varphi(\{\alpha,\beta\})})$, we can assume that $g_\alpha=f_\alpha\big|_{\pos{S_1\setminus (\alpha\cup\beta)}}$ and $g_\beta=f_\beta\big|_{\pos{S_1\setminus (\alpha\cup\beta)}}$ are homeomorphisms onto $\pos{S_2\setminus\varphi(\{\alpha,\beta\})}$. Moreover $g_\beta^{-1}\circ g_\alpha$ induces the identity map on $\cc(\pos{S_1\setminus (\alpha\cup\beta)})$, which implies (by the rigidity of the curve graph) that the class of $g_\beta^{-1}\circ g_\alpha$ is trivial in $\mcg{\left(\pos{S_1\setminus(\alpha\cup\beta)}\right)}$. Thus $g_\alpha=g_\beta\circ h$, where $h$ is a diffeomorphism of $\pos{S_1\setminus(\alpha\cup\beta)}$ isotopic to the identity. Up to replacing $f_\beta$ with an extension of $g_\beta\circ h$, we can assume $f_\alpha$ and $f_\beta$ agree on $\pos{S_1\setminus (\alpha\cup\beta)}$. Hence they define a map $f:S_1\rightarrow S_2$. By elementary topology arguments, $f$ is a $\pi_1$-injective embedding.

When $d=0$, $f(\alpha)=\nu_\alpha$ and $f(\beta)=\nu_\beta$. In particular, we know that $\nu_\alpha$ and $\nu_\beta$ are of the same type (outer or nonseparating) as $\alpha$ and $\beta$ (by \cite[Lemmas 10 and 11]{shackleton}). Now $f_\alpha$ is a map inducing a simplicial embedding between the curve complexes of $\pos{S_1\setminus \alpha}$ and $\pos{S_2\setminus \nu_\alpha}$. The surfaces have the same complexity, so by induction\footnote{Actually, for $k_1=2$ the tool we need is Shackleton's result in \cite{shackleton} which is in fact what we use in the base case. For $k_1\geq 3$ we really need the induction hypothesis.} we could have chosen it to be a homeomorphism between $\pos{S_1\setminus\alpha}$ and $\pos{S_2\setminus\nu_\alpha}$. The same holds for $f_\beta$. As a consequence $f$ can be chosen to be a homeomorphism between $S_1$ and $S_2$.

{\bf Case $\bm{k_1\geq 3}$:} consider $\mu\in C_{\alpha\cup\beta}$, that is, $\mu=\{\alpha\}\cup\{\beta\}\cup \widetilde{\mu}$, where $\widetilde{\mu}$ is a $(k_1-2)$-multicurve. Then
\begin{align*}
\varphi(\mu)&=\nu_\alpha\cup f_\alpha(\mu\setminus\{\alpha\})=\nu_\alpha\cup \{f_\alpha(\beta)\}\cup f_\alpha(\widetilde{\mu})\\
&=\nu_\beta\cup f_\beta(\mu\setminus\{\beta\})=\nu_\beta\cup \{f_\beta(\alpha)\}\cup f_\beta(\widetilde{\mu}).
\end{align*}
If we let $\mu$ vary in $C_{\alpha\cup\beta}$, i.e.\ we let $\widetilde{\mu}$ vary in $\mg{k_1-2}{S_1\setminus(\alpha\cup\beta)}$, by the injectivity of $\varphi$ $f_\alpha(\widetilde{\mu})$ and $f_\beta(\widetilde{\mu})$ vary (and the rest is fixed, since it doesn't depend on $\widetilde{\mu}$), so
$$\nu_\alpha\cup \{f_\alpha(\beta)\}=\nu_\beta\cup \{f_\beta(\alpha)\}\;\mbox{  and  }\; f_\alpha(\widetilde{\mu})=f_\beta(\widetilde{\mu}).$$
This implies that $f_\alpha$ and $f_\beta$ induce the same map from
$$\mg{k_1-2}{\pos{S_1\setminus(\alpha\cup\beta)}}\rightarrow\mg{k_1-2}{\pos{S_2\setminus(\nu_\alpha\cup f_\alpha(\beta))}}$$
and with the same argument as in the case $k_1=2$, this time by Theorem \ref{rigidity_intro}, we obtain that $f_\alpha\big|_{\pos{S_1\setminus(\alpha\cup\beta)}}=f_\beta\big|_{\pos{S_1\setminus(\alpha\cup\beta)}}\circ h$, where $h$ is a diffeomorphism of $S_1\setminus(\alpha\cup\beta)$ isotopic to the identity. Again, up to replacing $f_\beta$ with an extension of $f_\beta\big|_{S_1\setminus(\alpha\cup\beta)}\circ h$, we can assume that $f_\alpha$ and $f_\beta$ agree on $\pos{S_1\setminus (\alpha\cup\beta)}$, so that they define a map $f:S_1\rightarrow S_2$. As in the case $k_1=2$, $f$ is a $\pi_1$-injective embedding and a homeomorphism if $d=0$. As $f$ is injective, $f(\alpha)\neq f(\beta)$. This implies that $\nu_\alpha\cap\nu_\beta$ is a $d$-multicurve, which we denote by $\nu$.
\end{proof}

\begin{rmk} Different choices in the construction above give different maps from $S_1$ to $S_2$. This is not surprising, since if there is a map inducing a simplicial embedding, it is not unique. For instance, if $\psi:\mg{k_1}{S_1}\hookrightarrow \mg{k_2}{S_2}$ is a simplicial embedding induced by a map $g$, it is also induced by $g\circ h$, where $h$ is any homeomorphism of $S_1$ isotopic to the identity. On the other hand, the multicurve is uniquely determined by the simplicial embedding.
\end{rmk}

\begin{proof}[Proof of Lemma \ref{nonsep}]
Let $f$ and $\nu$ be given by the nice pair $(\alpha, \beta)$ according to Lemma \ref{nicepair}. Consider any curve $\gamma\in\bc(S_1)$. Since $\bc(S_1)$ is connected, there is a path
$$\gamma_0=\alpha,\gamma_1,\dots,\gamma_m=\gamma.$$
Consider $\alpha$ and $\gamma_1$. As they form a nice pair, they determine a map $f'$ and a $d$-multicurve $\nu'$ by Lemma \ref{nicepair} and we can assume $f'=f$ on $\pos{S_1\setminus \alpha}$ (as they both agree with $f_\alpha$). Note that this implies $\nu'=\nu$ since $f'(\mu)\cup\nu'=\varphi(\mu)=f(\mu)\cup\nu$ for any $k$-multicurve $\mu$ on $\pos{S_1\setminus \alpha}$. Our goal is to show that $f'=f$ on $S_1$. This implies, by repeating the argument along the path, that $\varphi$ is induced by $f$ and $\nu$ on $C_{\gamma}$.

If $\alpha$ is nonseparating then we must have $f=f'$ on $S_1$ by continuity. So assume $\alpha$ is an outer curve. Since $f'=f$ on $\pos{S_1\setminus \alpha}$, the two maps differ a priori by a twist about $\alpha$. To see that this is not the case we will show there exists a curve $\delta$ intersecting $\alpha$ such that $f(\delta)=f'(\delta)$. 

First consider the case when $\beta$ and $\gamma_1$ are disjoint. Let $\delta$ be a curve disjoint from both $\beta$ and $\gamma_1$ but intersecting $\alpha$. If $k_1\geq3$ there exists a $k_1$-multicurve $\mu$ containing $\beta, \gamma_1$ and $\delta$. Since $\mu\in C_{\beta}\cup C_{\gamma_1}$, 
$$f(\mu)\cup\nu=\varphi(\mu)=f'(\mu)\cup\nu.$$
Moreover, $\mu\setminus\delta\in \pos{S_1\setminus \alpha}$, so $f(\mu\setminus\delta)=f'(\mu\setminus\delta)$ and we must have $f(\delta)=f'(\delta)$. If $k_1=2$ we instead consider the multicurves $\mu_1=\{\beta, \delta\}$ and $\mu_2=\{\gamma_1, \delta\}$. As $\mu_1$ and $\mu_2$ are adjacent in the multicurve graph, so are $\varphi(\mu_1)$ and $\varphi(\mu_2)$. In particular they differ by exactly one curve. Now, 
$$\varphi(\mu_1)=f(\beta)\cup f(\delta)\cup\nu$$
and
$$\varphi(\mu_2)=f'(\gamma_1)\cup f'(\delta)\cup\nu=f(\gamma_1)\cup f'(\delta)\cup\nu.$$
This implies $f(\delta)=f'(\delta)$ because $f(\beta)\neq f(\gamma_1)$. 

Now consider the case when $\beta$ and $\gamma_1$ intersect. One can show that there is a path $$\beta_0=\beta,\beta_1,\dots,\beta_m=\gamma_1$$ in $\cc{(\pos{S_1\setminus \alpha})}$ such that $(\alpha,\beta_i)$ form a nice pair for each $i$. To prove this, the idea is to construct curves in $\bc(\pos{S_1\setminus\alpha})$ with a similar procedure as the one for Lemma \ref{bconn}. One just needs to be careful not to choose in the process any outer curve of $\pos{S_1\setminus \alpha}$ which bounds a pair of pants with $\alpha$ on $S_1$. Such a choice would give a curve which is not outer in $S_1$ (as in Figure \ref{nonouter}). This can be done because $\xi(\pos{S_1\setminus \alpha})=\xi(S_1)-1\geq 5$.

\begin{figure}[h]
\begin{overpic}{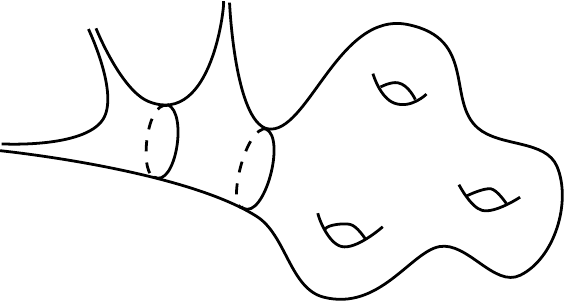}
\put (24,13) {$\alpha$}
\put (40,6) {$\delta$}
\end{overpic}
\caption{$\delta$ is an outer curve in $\pos{S_1\setminus \alpha}$, but not in $S_1$.}\label{nonouter}
\end{figure}

By replacing $\beta$ and $\gamma_1$ by $\beta_i$ and $\beta_{i+1}$ we can apply the above argument along this path to obtain again that $f(\delta)=f'(\delta)$. 
\end{proof}

The final step is showing that if $\varphi$ is induced by $f$ and $\nu$ on $\bg{k_1}{S_1}$, then it is is induced by $f$ and $\nu$ everywhere.

\begin{lemma}\label{image}
The image of $\varphi$ is in $C_\nu$.
\end{lemma}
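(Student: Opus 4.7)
The plan is to leverage the density property recorded in Remark \ref{adjacent}: every vertex of $\mg{k_1}{S_1}\setminus \bg{k_1}{S_1}$ is adjacent to infinitely many vertices of $\bg{k_1}{S_1}$. Fix any $\mu\in \mg{k_1}{S_1}$ and aim to show $\nu\subseteq \varphi(\mu)$. If $\mu\in\bg{k_1}{S_1}$ then by hypothesis $\varphi(\mu)=f(\mu)\cup \nu$, so the conclusion is immediate. Otherwise, Remark \ref{adjacent} supplies an infinite family of multicurves $\mu_1,\mu_2,\dots\in \bg{k_1}{S_1}$ adjacent to $\mu$ in $\mg{k_1}{S_1}$; their images $\varphi(\mu_i)$ are pairwise distinct (by injectivity of $\varphi$), each one contains $\nu$, and each is adjacent to $\varphi(\mu)$ in $\mg{k_2}{S_2}$.

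I would then argue by contradiction: suppose there exists a curve $n\in \nu$ with $n\notin\varphi(\mu)$. Adjacency in $\mg{k_2}{S_2}$ forces $|\varphi(\mu_i)\cap\varphi(\mu)|=k_2-1$, hence $\varphi(\mu_i)\setminus\varphi(\mu)$ consists of a single curve; since $n\in\varphi(\mu_i)\setminus\varphi(\mu)$, that curve must be $n$. Consequently, $\varphi(\mu_i)=(\varphi(\mu)\setminus\{c_i\})\cup\{n\}$ for some $c_i\in\varphi(\mu)$. Because $|\varphi(\mu)|=k_2$ is finite while the $\mu_i$ are infinite in number, pigeonhole gives indices $i\neq j$ with $c_i=c_j$, hence $\varphi(\mu_i)=\varphi(\mu_j)$, contradicting the injectivity of $\varphi$. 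Therefore $\nu\subseteq \varphi(\mu)$, as desired.

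I do not anticipate any real obstacle here: the lemma is essentially a pigeonhole argument built on top of Remark \ref{adjacent} and the already-established description of $\varphi$ on $\bg{k_1}{S_1}$. The only mild points to verify are that the adjacency used for Remark \ref{adjacent} requires enough room in $S_1$, which is guaranteed by the standing hypothesis $\xi(S_1)\geq 4+k_1$ of Theorem \ref{embedding_intro}, and that the definition of edges in $\mg{k_2}{S_2}$ really does produce symmetric differences of size exactly one on each side, which is what makes the counting step work.
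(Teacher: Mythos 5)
Your proof is correct and takes essentially the same route as the paper: the hypothesis on $\bg{k_1}{S_1}$, Remark~\ref{adjacent}, and a pigeonhole argument. The only cosmetic difference is where the counting is done — the paper shows every adjacent $\bar\mu\in\bg{k_1}{S_1}$ must lie inside the finite set $f^{-1}\bigl(\varphi(\mu)\setminus\nu\bigr)$, whereas you bound the images $\varphi(\mu_i)$ directly, noting each is determined by the single curve $c_i\in\varphi(\mu)$ it drops; both give a finite bound against the infinitude supplied by Remark~\ref{adjacent}.
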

\begin{proof}
Suppose not. Then let $\mu\in\mg{k_1}{S_1}\setminus\bg{k_1}{S_1}$ such that $\varphi(\mu)\notin C_\nu$, i.e.\ that there is some curve $\alpha\in \nu$ such that $\alpha\notin\varphi(\mu)$. Consider any $\bar{\mu}\in \bg{k_1}{S_1}$ adjacent to $\mu$ ($\bar{\mu}$ exists by Remark \ref{adjacent}). Its image is
$$\varphi(\bar{\mu})=\nu\cup f(\bar{\mu}).$$
Then
$$|\varphi(\mu)\cap\varphi(\bar{\mu})|=k_2-1,$$
so
$$\varphi(\mu)\cap\varphi(\bar{\mu})=\nu\setminus \{\alpha\}\cup f(\bar{\mu}).$$
This implies that
$$f(\bar{\mu})\subseteq \varphi(\mu)\setminus \nu=: \eta.$$
Note that as $\varphi(\mu)$ and $\varphi(\bar{\mu})$ are adjacent, they differ by one curve, which must be $\alpha$. Thus $\nu\setminus\alpha\subseteq \varphi(\mu)$ and $\eta$ has cardinality $k_2-(d-1)=k_1+1.$ So $\bar{\mu}$ is contained in $f^{-1}(\eta)$, which has cardinality at most $k_1+1$ since $f$ is injective. 

This holds for every  $\bar{\mu}\in \bg{k_1}{S_1}$ that is adjacent to $\mu$, and $\eta$ does not depend on $\bar{\mu}$. There are at most $k_1+1$ multicurves contained in a set of cardinality $k_1+1$, but we know from Remark \ref{adjacent} that $\mu$ has infinitely many adjacent multicurves in $\bg{k_1}{S_1}$, a contradiction.
\end{proof}

As a consequence, we obtain a map $\Phi:\mg{k_1}{S_1}\rightarrow\mg{k_1}{S_2}$ given by 
$$\Phi(\mu):= \varphi(\mu)\setminus \nu.$$
Clearly
$$\varphi(\mu)=\Phi(\mu)\cup \nu$$
for every $\mu\in\mg{k_1}{S_1}$. So we just want to show that $\Phi$ is (induced by) $f$.
\begin{lemma}
For any $\beta$ on $S_1$, $\Phi(C_{\beta})\subseteq C_{f(\beta)}$.
\end{lemma}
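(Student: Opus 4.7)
The plan is to show that for every $\mu \in C_{\beta}$ one has $f(\beta) \in \Phi(\mu)$. I would split into two cases according to whether $\mu$ lies in $\bg{k_1}{S_1}$ or not. The easy case is $\mu \in \bg{k_1}{S_1}$, where Lemma \ref{nonsep} immediately gives $\varphi(\mu) = f(\mu) \cup \nu$, hence $\Phi(\mu) = f(\mu) \ni f(\beta)$. The substantive work is therefore in the case $\mu \notin \bg{k_1}{S_1}$, where I would adapt the counting trick already used in the proof of Lemma \ref{image}.

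For the hard case, by Remark \ref{adjacent} the multicurve $\mu$ is adjacent to infinitely many $\bar{\mu} \in \bg{k_1}{S_1}$, each of the form $\bar{\mu} = (\mu \setminus \{b\}) \cup \{a\}$ for some $b \in \mu$ and some nonseparating or outer curve $a$ disjoint from $\mu$. Since $k_1 \geq 2$, I can fix some $b \in \mu$ with $b \neq \beta$ and still let $a$ range over an infinite family of curves, producing infinitely many $\bar{\mu} \in \bg{k_1}{S_1} \cap C_{\beta}$ adjacent to $\mu$. By Lemma \ref{nonsep}, $\Phi(\bar{\mu}) = f(\bar{\mu})$ for every such $\bar{\mu}$, and this set contains $f(\beta)$.

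Now suppose for contradiction that $f(\beta) \notin \Phi(\mu)$. Since $\varphi(\mu)$ and $\varphi(\bar{\mu})$ are adjacent in $\mg{k_2}{S_2}$, and both contain $\nu$ (the former by Lemma \ref{image}, the latter by Lemma \ref{nonsep}), their intersection has exactly $k_2 - 1$ curves and contains $\nu$; stripping off $\nu$ yields $|\Phi(\mu) \cap f(\bar{\mu})| = k_1 - 1$. Because $\beta \in \bar{\mu}$ but $f(\beta) \notin \Phi(\mu)$, this $(k_1-1)$-subset must be $f(\bar{\mu} \setminus \{\beta\})$, and in particular $f(a) \in \Phi(\mu)$. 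Letting $a$ vary (with $b$ fixed), the injectivity of $f$ produces infinitely many distinct elements $f(a)$ inside the finite set $\Phi(\mu)$, which has exactly $k_1$ elements. This is the desired contradiction, and we conclude $f(\beta) \in \Phi(\mu)$.

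The main obstacle I expect is not conceptual but bookkeeping: one must carefully track the cardinalities ($|\Phi(\mu)| = k_1$ and $|\nu| = d = k_2 - k_1$), verify that $\nu \subseteq \varphi(\mu) \cap \varphi(\bar{\mu})$ to extract the count $|\Phi(\mu) \cap f(\bar{\mu})| = k_1 - 1$, and check that Remark \ref{adjacent} still provides infinitely many neighbours once one insists on retaining $\beta$. All of these are routine thanks to $k_1 \geq 2$, so no genuinely new idea beyond the template of Lemma \ref{image} is required.
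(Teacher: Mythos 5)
Your proposal is correct and follows essentially the same counting argument as the paper's proof, which likewise mirrors Lemma \ref{image}. You are in fact slightly more careful than the paper in restricting attention to neighbours $\bar{\mu}$ that retain $\beta$ (fixing $b\neq\beta$, available since $k_1\geq 2$), a point the paper leaves implicit when it writes $\Phi(\bar{\mu})\cap\Phi(\mu)=f(\bar{\mu})\setminus\{f(\beta)\}$.
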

\begin{proof}
The idea of the proof is the same as for Lemma \ref{image}.

If $\mu\in C_{\beta}\cap \bg{k_1}{S_1}$, then $\Phi(\mu)=f(\mu)\in C_{f(\beta)}$.

Suppose now $\mu\in C_{\beta}$ is not in $\bg{k_1}{S_1}$. Any $\bar{\mu}\in \bg{k_1}{S_1}$ adjacent to $\mu$ satisfies
$$\Phi(\bar{\mu})\cap \Phi(\mu)=f(\bar{\mu})\setminus \{f(\beta)\}$$
so $\bar{\mu}$ is a subset $\Phi(\mu)\cup \{f(\beta)\}$. This implies that there are finitely many multicurves in $\bg{k_1}{S_1}$ adjacent to $\mu$, contradicting Remark \ref{adjacent}.
\end{proof}

Now take any $\mu\in \mg{k_1}{S_1}$, say $\mu=\{\beta_1,\dots,\beta_{k_1}\}$. Then
$$\{\Phi(\mu)\}=\bigcap_{j=1}^{k_1} \Phi(C_{\beta_j})\subseteq \bigcap_{j=1}^{k_1} C_{f(\beta_j)}={f(\mu)}.$$
So $\Phi(\mu)=f(\mu)$ for all $\mu\in\mg{k_1}{S_1}$, which ends the proof of the theorem.
\bibliographystyle{alpha}
\bibliography{references}
\end{document}